\newcommand{\supp}{\mathrm{supp\,}} 
\def\moverlay{\mathpalette\mov@rlay}
\def\mov@rlay#1#2{\leavevmode\vtop{%
   \baselineskip\z@skip \lineskiplimit-\maxdimen
   \ialign{\hfil$\m@th#1##$\hfil\cr#2\crcr}}}
\newcommand{\charfusion}[3][\mathord]{
    #1{\ifx#1\mathop\vphantom{#2}\fi
        \mathpalette\mov@rlay{#2\cr#3}
      }
    \ifx#1\mathop\expandafter\displaylimits\fi}
\newcommand{\cupdot}{\charfusion[\mathbin]{\cup}{\cdot}}
\newcommand{\C}{\mathbb C}
\newcommand{\R}{\mathbb R}
\newcommand{\Z}{\mathbb{Z}}
\newcommand{\N}{\mathbb{N}}
\newcommand{\kernel}{\mathrm{ker}} 
\newcommand{\dist}{\mathrm{dist}} 
\def\boldb{ \boldsymbol{b}}
\def\bc{ \boldsymbol{c}}
\def\bv{ \boldsymbol{v}}
\newtheorem*{thm-nonumber}{Theorem}			
\newtheorem{theorem}{Theorem}
\newtheorem{lemma}[theorem]{Lemma}
\newtheorem*{prop-nonumber}{Proposition}			
\newtheorem{proposition}[theorem]{Proposition}
\newtheorem{example}[theorem]{Example}
\newtheorem*{example-nonumber}{Example}
\newtheorem*{remark}{Remark}		
\newtheorem{definition}[theorem]{Definition}         
\newtheorem{Question}{Question}
\newcommand{\neutralize}[1]{\expandafter\let\csname c@#1\endcsname\count@}
\begin{document}

\begin{frontmatter}



\title{Unions of exponential Riesz bases}


\author{Dae Gwan Lee}
\ead{daegwans@gmail.com; daegwan@kumoh.ac.kr}


\address{Department of Mathematics and Big Data Science, Kumoh National Institute of Technology, \\
Gumi, Gyeongsangbuk-do 39177, Republic of Korea (South Korea)}




\begin{abstract}
We develop new methods for constructing exponential Riesz bases by taking unions of exponential Riesz bases.  
These methods are based on taking unions of frequency sets and domains respectively and therefore allow easy construction. 
Together with examples that illustrate our methods, we also provide several examples showing the delicate nature of exponential Riesz bases.
\end{abstract}



\begin{keyword}
complex exponentials \sep
Riesz bases \sep
spectrum \sep
union


\MSC 42C15
\end{keyword}

\end{frontmatter}



\section{Introduction and Main Results}
\label{sec:intro}

For a discrete set $\Lambda \subset \R$ and a positive measure set $S \subset \R$, we consider the \emph{complex exponential system} $E(\Lambda) := \{e^{2\pi i \lambda (\cdot)} : \lambda\in\Lambda\}$ on $S$.
The set $\Lambda$ is often called a \emph{frequency set} or a \emph{spectrum}, and $S$ is called a \emph{domain}.

Despite the simple formulation of exponential systems, the existence of exponential Riesz bases for a given domain $S$ is a highly nontrivial problem.
In 1995, Seip \cite{Se95} proved that for any interval $S \subset [0,1)$, there exists a set $\Lambda \subset \Z$ such that $E(\Lambda)$ is a Riesz basis for $L^2(S)$.
Only recently in 2015, Kozma and Nitzan \cite{KN15} showed that for any finite union $S$ of disjoint intervals in $[0,1)$,
there exists a set $\Lambda \subset \Z$ such that $E(\Lambda)$ is a Riesz basis for $L^2(S)$.
Up to date, there are only a handful of classes of domains for which exponential Riesz bases are known to exist, see e.g., \cite{CC18,CL22,DL19,GL14,Ko15,KN15,LPW22,LR00,PRW21}.
Recently, Kozma, Nitzan, and Olevskii \cite{KNO21} constructed a bounded measurable set $S \subset \R$ such that no exponential system can be a Riesz basis for $L^2(S)$.

Aside from the question of existence of exponential Riesz bases,
one may ask whether frequency sets and domains of (already existing) exponential Riesz bases can be combined/split in a canonical way.

\begin{Question}[Combining exponential Riesz bases]
\label{Q1}
Let $S_1, S_2 \subset \R$ be disjoint measurable sets and
let $\Lambda_1, \Lambda_2 \subset \R$ be discrete sets with $\dist (\Lambda_1, \Lambda_2) := \inf \{ | \lambda_1 - \lambda_2 | : \lambda_1 \in \Lambda_1, \, \lambda_2 \in \Lambda_2 \} > 0$. Assume that for each $n = 1,2$, the system $E(\Lambda_n)$ is a Riesz basis for $L^2(S_n)$.
Is $E(\Lambda_1 \cup \Lambda_2 )$ then a Riesz basis for $L^2( S_1 \cup S_2 )$?
\end{Question}

Note that we have excluded the case $\dist (\Lambda_1, \Lambda_2) = 0$ from consideration.
This case can be divided into two sub-cases:
\begin{itemize}
\item[(i)] $\Lambda_1 \cap \Lambda_2 \neq \emptyset$
\item[(ii)] $\Lambda_1 \cap \Lambda_2 = \emptyset$ and there exist sequences $\{ \lambda_{1,k} \}_{k=1}^\infty \subset \Lambda_1$ and $\{ \lambda_{2,k} \}_{k=1}^\infty \subset \Lambda_2$ such that $| \lambda_{1,k} - \lambda_{2,k} | < 1/k$ for all $k =1, 2, \cdots$. 
\end{itemize}
Certainly, we do not want the frequency sets $\Lambda_1$ and $\Lambda_2$ to intersect, since those elements lying in the intersection would be counted only once in $\Lambda_1 \cup \Lambda_2$. 
On the other hand, (ii) would imply that $\Lambda_1 \cup \Lambda_2$ is a non-separated set, in which case $E(\Lambda_1 \cup \Lambda_2 )$ is not even a Riesz sequence.
This justifies the assumption $\dist (\Lambda_1, \Lambda_2) > 0$ in Question~\ref{Q1}.

\begin{Question}[Splitting exponential Riesz bases]
\label{Q2}
Let $S_1, S_2 \subset \R$ be disjoint measurable sets.
If $\Lambda_1, \Lambda_2 \subset \R$ are disjoint sets such that $E(\Lambda_2)$ and $E(\Lambda_1 \cup \Lambda_2 )$ are Riesz bases for $L^2(S_2)$ and $L^2( S_1 \cup S_2 )$ respectively, then is the system $E(\Lambda_1)$ a Riesz basis for $L^2(S_1)$?
\end{Question}

Unfortunately, both questions turn out negative in general.

For Question~\ref{Q1}, note that $E( 4\Z \cup (4\Z{+}2) )$ is \emph{not} a Riesz basis for $L^2([0,\frac{1}{4}) \cup [\frac{2}{4},\frac{3}{4}))$,
even though $E(4\Z)$ and $E(4\Z{+}2)$ form Riesz bases for $L^2[0,\frac{1}{4})$ and $L^2[\frac{2}{4},\frac{3}{4})$ respectively.
However, it can be amended to form a Riesz basis by slightly shifting one of the frequency sets.
Indeed, for every $0 < \delta < 2$, the system $E( 4\Z \cup (4\Z{+}2{+}\delta) )$ is a Riesz basis for $L^2([0,\frac{1}{4}) \cup [\frac{2}{4},\frac{3}{4}))$ while each of $E(4\Z)$ and $E(4\Z{+}2{+}\delta)$ forms a Riesz basis for $L^2[0,\frac{1}{4})$ and $L^2[\frac{2}{4},\frac{3}{4})$ respectively.
See Example \ref{ex:RB-N4-periodic} below for more details.

We also provide an example which cannot be amended by shifting the frequency sets as above.
Fix any $0 < \epsilon < \frac{1}{4}$ and let 
\begin{equation}\label{eqn:example-Lambda1-Lambda2}
\Lambda^{(1)} := \{ \ldots , \, -6, \, -4, \, -2, \, 0 , \, 1 {+}\epsilon , \, 3 {+}\epsilon , \, 5 {+}\epsilon , \, \ldots \} ,
\quad \Lambda^{(2)} := - \Lambda^{(1)}   \, . 
\end{equation}
Then $E(\Lambda^{(1)})$, $E(\Lambda^{(2)})$ and $E(\Lambda^{(1)}\cup \Lambda^{(2)} )$ form Riesz bases for $L^2[0,\frac{1}{2})$, $L^2[\frac{1}{2},1)$ and $L^2[0,1)$ respectively, despite the fact that $\Lambda^{(1)} \cap \Lambda^{(2)} = \{ 0 \}$.
To separate the frequency sets apart, we consider a uniform shifting of $\Lambda^{(2)}$.
For any $0 < \delta < \epsilon$, we have 
$\dist ( \Lambda^{(1)}, \Lambda^{(2)} {+} \delta) = \delta > 0$ and each of $E(\Lambda^{(1)})$ and $E(\Lambda^{(2)} {+} \delta)$ forms a Riesz basis for $L^2[0,\frac{1}{2})$ and $L^2[\frac{1}{2},1)$ respectively.
However, their union $E(\Lambda^{(1)} \cup (\Lambda^{(2)} {+} \delta))$ is overcomplete in $L^2[0,1)$ and thus \emph{cannot} be a Riesz basis for $L^2[0,1)$.
See Example \ref{ex:TwoRieszBases-1-intersection} below for more details.

Concerning Question~\ref{Q2},
the sets $\Lambda_1 := \Lambda^{(1)} \backslash \{ 0 \}$ and $\Lambda_2 := \Lambda^{(2)}$ are disjoint and satisfy that $E(\Lambda_2)$ and $E(\Lambda_1 \cup \Lambda_2 )$ are Riesz bases for $L^2[\frac{1}{2},1)$ and $L^2[0,1)$ respectively. However, $E(\Lambda_1)$ is incomplete in $L^2[0,\frac{1}{2})$ and therefore is not a Riesz basis for $L^2[0,\frac{1}{2})$.

These observations suggest that in order to derive some positive answers to Questions \ref{Q1} and \ref{Q2},
we would need to impose some additional conditions on the sets $S_1, S_2 \subset \R$ and $\Lambda_1, \Lambda_2 \subset \R$.

\subsection{Related work on Questions \ref{Q1} and \ref{Q2}}
\label{subsec:RelatedWork}

Recently, Frederick and Mayeli \cite{FM21} obtained the following result which is related to Question~\ref{Q1}.
Let $S \subset [0,1)$ be a measurable set and let $A \subset \Z_N := \{ 0, 1, \ldots, N{-}1 \}$ with $N \in \N$.
If there exist sets $\Lambda \subset \R$ and $J \subset \Z_N$ such that $E(\Lambda)$ and $E(J)$ are Riesz bases for $L^2(S)$ and $\ell_2 (A)$ respectively, and if
\begin{equation}\label{eqn:eqn5-FM21}
\widehat{\delta_\lambda} (\omega) = 1
\quad \text{for all} \;\; \lambda \in \Lambda , \;\; \omega \in A,
\end{equation}
then $E(\Lambda{+}\frac{J}{N})$ is a Riesz basis for $L^2(S{+}A)$ (see \cite[Theorem 2]{FM21}).
Here, the sums are understood in the Minkowski sense, that is, $\Lambda+\frac{J}{N} = \{ \lambda + \frac{j}{N} : \lambda \in \Lambda, \; j \in J \}$ and
$S+A = \{ s + a : s \in S, \; a \in A \}$. 
Also, $\delta_\lambda$ denotes the Dirac delta at $\lambda$, and $\widehat{f}$ denotes the Fourier transform of $f$ which is defined as $\widehat{f} (\omega) = \int_{-\infty}^{\infty} f(t) \, e^{- 2 \pi i t \omega} \, dt$ for $f \in L^1(\R) \cap L^2(\R)$ 
and is defined also for $f$ in $L^2(\R)$ and even tempered distributions by continuity. 

This result, however, relies heavily on the Fourier duality and the condition \eqref{eqn:eqn5-FM21} which requires some implicit symmetry in $\Lambda$ and $A$.

Another work related to Question~\ref{Q1} is the key lemma used by Kozma and Nitzan \cite{KN15}. We defer the detailed discussion to Section~\ref{sec:comparison-existing-work}.

Regarding Question~\ref{Q2}, we mention two related results.
It is shown in \cite[Proposition 2.1]{MM09} (also see \cite[Proposition 5.4]{BCMS19}) that for any $\Lambda \subset \Z$ and any measurable set $S \subset [0,1)$, the system $E(\Lambda)$ is a Riesz basis for $L^2(S)$ if and only if $E(\Z \backslash \Lambda)$ is a Riesz basis in $L^2 ( [0,1)\backslash S )$.
Also, it is proved in \cite{LS01} that if $E(\Lambda)$ with $\Lambda \subset \R$ is a Riesz basis for $L^2[0,1)$,
then for each $0 < a < 1$, there \emph{exists} a subset $\Lambda' \subset \Lambda$ such that $E(\Lambda')$ is a Riesz basis for $L^2[0,a)$ and $E(\Lambda \backslash \Lambda')$ is a Riesz basis for $L^2[a,1)$.
The former result provides a positive answer to Question~\ref{Q2} in the case that $\Lambda_1 \cupdot \Lambda_2 = \Z$ and $S_1 \cupdot S_2 = [0,1)$, while the latter result only shows the existence of a \emph{particular} splitting of $\Lambda$ into $\Lambda_1 := \Lambda'$ and $\Lambda_2 := \Lambda \backslash \Lambda'$ such that $E(\Lambda_1)$ is a Riesz basis for $L^2[0,a)$ and $E(\Lambda_2)$ is a Riesz basis for $L^2[a,1)$.

\subsection{Main Results}
\label{subsec:Main results}

In this paper, we develop new methods for combining exponential Riesz bases, which rely on 
taking unions of frequency sets and domains respectively.
This is conceptually different from the key lemmas of \cite{CL22} and \cite{KN15} which combines the frequency sets for $A_{\geq n}$, $n=1,\ldots,N$ (see Section \ref{sec:comparison-existing-work} for more details).
Also, our result requires only some mild conditions on the sets themselves and does not require any involved conditions like \eqref{eqn:eqn5-FM21}.


\begin{theorem}\label{thm:two-sets-nested-translation-by-a}
Let $S_1, S_2 \subset \R$ be disjoint bounded measurable sets with $S_1 {+} a \subset S_2$ for some $a \in \R \backslash \{ 0 \}$.
If there exist sets $\Lambda_1 \subset \R \backslash \frac{1}{a}\Z$ and $\Lambda_2 \subset \frac{1}{a}\Z$ with $\dist (\Lambda_1, \frac{1}{a}\Z) > 0$ such that for each $\ell = 1,2$, the system $E(\Lambda_\ell)$ is a Riesz basis (resp.~a complete sequence, a Riesz sequence) for $L^2(S_\ell)$, then $E ( \Lambda_1 \cup \Lambda_2 )$ is a Riesz basis (resp.~a complete sequence, a Riesz sequence) for $L^2( S_1 \cup S_2 )$.
\end{theorem}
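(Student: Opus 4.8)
The plan is to establish the three assertions — completeness, the lower Riesz bound, and the upper Riesz bound — separately, so that the Riesz-basis case is their conjunction while each ``resp.'' variant uses only the relevant piece. Write $e_\lambda(t) := e^{2\pi i\lambda t}$, put $\omega_\lambda := e^{2\pi i\lambda a}$, and decompose $S_1\cup S_2 = S_1 \cupdot (S_1{+}a) \cupdot S_2'$ with $S_2' := S_2\setminus(S_1{+}a)$. Two structural facts drive everything. First, $\omega_\mu = 1$ whenever $\mu\in\tfrac1a\Z$, so any finite linear combination of $E(\Lambda_2)$ is $a$-periodic. Second, since $|\omega_\lambda-1| = 2\sin\!\big(\pi\,\dist(a\lambda,\Z)\big)$ and $\dist(a\lambda,\Z) = |a|\,\dist(\lambda,\tfrac1a\Z)$, the hypothesis $\dist(\Lambda_1,\tfrac1a\Z)>0$ yields a constant $\delta_0>0$ with $|\omega_\lambda-1|\ge\delta_0$ for all $\lambda\in\Lambda_1$; this is the one place that hypothesis is used.

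For completeness, take $f\in L^2(S_1\cup S_2)$ orthogonal to $E(\Lambda_1\cup\Lambda_2)$, split $\langle f,e_\nu\rangle_{L^2(S_1\cup S_2)}$ over the three pieces, and substitute $t\mapsto t{+}a$ on the middle one to get $\langle f,e_\nu\rangle = \int_{S_1}\big(f(t)+\overline{\omega_\nu}\,f(t{+}a)\big)\,\overline{e_\nu(t)}\,dt + \int_{S_2'}f\,\overline{e_\nu}$. Testing against $\nu\in\Lambda_2$ (where $\omega_\nu=1$) identifies the right-hand side with $\langle g,e_\nu\rangle_{L^2(S_2)}$, where $g$ equals $f(\cdot{-}a)+f$ on $S_1{+}a$ and equals $f$ on $S_2'$; completeness of $E(\Lambda_2)$ in $L^2(S_2)$ forces $g=0$, i.e.\ $f\equiv0$ on $S_2'$ and $f(t{+}a)=-f(t)$ a.e.\ on $S_1$. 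Feeding this back and testing against $\nu\in\Lambda_1$ annihilates the $S_2'$-term and collapses the remaining term to $(1-\overline{\omega_\nu})\langle f,e_\nu\rangle_{L^2(S_1)}$; as $\overline{\omega_\nu}\ne1$, completeness of $E(\Lambda_1)$ in $L^2(S_1)$ gives $f\equiv0$ on $S_1$, hence on $S_1{+}a$, hence everywhere.

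For the Riesz bounds, the upper one is routine: the Riesz-sequence hypothesis forces $\Lambda_1$ and $\Lambda_2$ to be relatively uniformly discrete (a standard consequence of the Bessel property over a set of positive measure), so $E(\Lambda_1)$ and $E(\Lambda_2)$ are Bessel on the bounded set $S_1\cup S_2$, and a union of two Bessel systems obeys an upper Riesz estimate. For the lower bound, fix finitely supported coefficients and set $F:=\sum_{\lambda\in\Lambda_1}c_\lambda e_\lambda$, $G:=\sum_{\mu\in\Lambda_2}d_\mu e_\mu$. Using $S_1{+}a\subset S_2$, then $G(t{+}a)=G(t)$, and finally the parallelogram law,
\[
\|F+G\|_{L^2(S_1\cup S_2)}^2 \;\ge\; \int_{S_1}\!\Big(|F(t)+G(t)|^2+|F(t{+}a)+G(t)|^2\Big)dt \;=\; 2\|\phi+G\|_{L^2(S_1)}^2 + 2\|\psi\|_{L^2(S_1)}^2,
\]
with $\phi:=\tfrac12\big(F+F(\cdot{+}a)\big)$ and $\psi:=\tfrac12\big(F-F(\cdot{+}a)\big)=\tfrac12\sum_{\lambda\in\Lambda_1}c_\lambda(1-\omega_\lambda)e_\lambda$. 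Dropping the $\phi$-term and applying the lower Riesz bound of $E(\Lambda_1)$ on $L^2(S_1)$ together with $|1-\omega_\lambda|\ge\delta_0$ gives $\|F+G\|_{L^2(S_1\cup S_2)}^2\gtrsim\sum_\lambda|c_\lambda|^2$. Then $\|G\|_{L^2(S_2)}\le\|F+G\|_{L^2(S_2)}+\|F\|_{L^2(S_2)}\lesssim\|F+G\|_{L^2(S_1\cup S_2)}$, the last step combining the Bessel bound of $E(\Lambda_1)$ on $L^2(S_2)$ with the estimate just obtained, and the lower Riesz bound of $E(\Lambda_2)$ on $L^2(S_2)$ upgrades this to $\sum_\mu|d_\mu|^2\lesssim\|F+G\|_{L^2(S_1\cup S_2)}^2$. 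Adding the two estimates completes the lower bound.

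I expect the lower Riesz bound to be the main obstacle. The key realization is that folding $S_1{+}a$ back onto $S_1$ and then splitting into the symmetric part $\phi$ and the antisymmetric part $\psi$ is precisely what decouples the two spectra: $\psi$ depends only on the $\Lambda_1$-coefficients, and because $|1-\omega_\lambda|\ge\delta_0>0$ it is comparable to $F$ itself, after which the $\Lambda_2$-coefficients follow by a triangle-inequality bootstrap against $G$. The completeness argument is a cleaner instance of the same folding and should be straightforward, and the only remaining inputs are the measure-theoretic bookkeeping of the partition $S_1\cupdot(S_1{+}a)\cupdot S_2'$ and the standard fact that a Bessel exponential system over a set of positive measure has relatively uniformly discrete frequency set.
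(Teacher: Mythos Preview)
Your proposal is correct. The completeness argument is essentially the Fourier-side dual of the paper's: where the paper takes $f\in PW(S_1\cup S_2)$ vanishing on $\Lambda_1\cup\Lambda_2$ and builds $g\in PW(S_2)$ via $\widehat g(\omega)=\widehat f(\omega)-\widehat f_1(\omega)+\widehat f_1(\omega-a)$ so that $g(x)=f(x)-(1-e^{2\pi iax})f_1(x)$, you work directly with $f\in L^2(S_1\cup S_2)$ and perform the same fold $t\mapsto t{+}a$ on the physical side; the two arguments translate into each other line by line.

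The Riesz-sequence part, however, is genuinely different. The paper invokes the equivalence ``Bessel + set of interpolation $\Leftrightarrow$ Riesz sequence'' and constructs the interpolating function in two steps: first $g\in PW(S_2)$ hitting the $\Lambda_2$-data, then a correction $(1-e^{2\pi iax})f_1(x)$ with $f_1\in PW(S_1)$ to fix the $\Lambda_1$-data, the distance hypothesis entering only to keep the rescaled target sequence in $\ell_2$. Your route is a direct lower-bound estimate: the parallelogram identity on $S_1\cup(S_1{+}a)$ isolates the antisymmetric part $\psi=\tfrac12\sum c_\lambda(1-\omega_\lambda)e_\lambda$, which carries only $\Lambda_1$-frequencies and is uniformly comparable to $F$ thanks to $|1-\omega_\lambda|\ge\delta_0$, after which a triangle-inequality bootstrap recovers the $\Lambda_2$-coefficients. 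Your argument yields explicit constants and avoids the Paley--Wiener machinery; the paper's interpolation argument is cleaner conceptually and, more importantly, extends with almost no change to the multi-translate setting of Theorem~\ref{thm:RB-union-two-sets-generalization} (one simply replaces $1-e^{2\pi iax}$ by $\prod_{k}(1-e^{2\pi i(x-c_k)/N})$), whereas your parallelogram trick is tailored to a single shift and would need a different averaging scheme for $K>1$. One small quibble: the parenthetical ``a standard consequence of the Bessel property'' is not quite right---it is the Riesz-sequence hypothesis, not the Bessel bound, that forces each $\Lambda_\ell$ to be separated---but your conclusion and its use are correct.
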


As a direct generalization of Theorem \ref{thm:two-sets-nested-translation-by-a}, we have the following.

\begin{theorem}\label{thm:RB-union-two-sets-generalization}
Let $S_1, S_2 \subset \R$ be disjoint bounded measurable sets satisfying
$S_1 {+} ka \subset S_2$ for $k = 1, \ldots, K$, with some $a \in \R \backslash \{ 0 \}$ and $K \in \N$.
Let $c_1, \ldots, c_K \in [0,\frac{1}{a})$ be distinct real numbers, and assume that
there exist sets $\Lambda_1 \subset \R \backslash \cup_{k=1}^K (\frac{1}{a}\Z{+}c_k)$ and $\Lambda_2 \subset \cup_{k=1}^K (\frac{1}{a}\Z{+}c_k)$ with $\dist (\Lambda_1, \cup_{k=1}^K (\frac{1}{a}\Z{+}c_k)) > 0$, such that
for each $\ell = 1,2$, the system $E(\Lambda_\ell)$ is a Riesz basis (resp.~a complete sequence, a Riesz sequence) for $L^2(S_\ell)$.
Then $E ( \Lambda_1 \cup \Lambda_2 )$ is a Riesz basis (resp.~a complete sequence, a Riesz sequence) for $L^2( S_1 \cup S_2 )$.
\end{theorem}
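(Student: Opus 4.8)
The plan is to reduce Theorem~\ref{thm:RB-union-two-sets-generalization} to Theorem~\ref{thm:two-sets-nested-translation-by-a} by a rescaling/unfolding trick. Set $a := \frac{1}{N}$, so that $\frac{1}{a}\Z = N\Z$ and the lattice $\cup_{k=1}^K(N\Z + c_k)$ lives inside the union of $N$-translates of $N\Z$. The obstruction to applying Theorem~\ref{thm:two-sets-nested-translation-by-a} directly is that the hypothesis there demands $\Lambda_2 \subset \frac{1}{a}\Z = N\Z$, a single arithmetic progression, whereas here $\Lambda_2$ may spread across $K$ distinct cosets $N\Z + c_k$ with arbitrary real shifts $c_k$. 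The idea is to absorb those $K$ cosets into a single progression of smaller step by working on a larger domain and then projecting back.

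First I would handle the case where each $c_k$ is a multiple of $\frac{1}{N}$ times an integer — more precisely, reduce to Theorem~\ref{thm:two-sets-nested-translation-by-a} applied iteratively, peeling off one coset at a time. Concretely: relabel so that $\Lambda_2 = \bigcupdot_{k=1}^K \Lambda_2^{(k)}$ where $\Lambda_2^{(k)} \subset N\Z + c_k$. For each $k$, $E(\Lambda_2^{(k)})$ is (a Riesz basis / complete / Riesz) for some $L^2$ space — but we are only told $E(\Lambda_2)$ as a whole has the property on $L^2(S_2)$, not that the pieces split nicely. So the clean path is instead the opposite: apply Theorem~\ref{thm:two-sets-nested-translation-by-a} \emph{once}, with its $a$ chosen so that $\frac{1}{a}\Z$ is exactly a progression containing $\cup_{k=1}^K(N\Z+c_k)$. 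That requires $c_1,\dots,c_K$ to be rationally commensurable; in general they are arbitrary reals, so no common refinement lattice exists. This tells me the real proof must go through the substance of the proof of Theorem~\ref{thm:two-sets-nested-translation-by-a}, adapting it rather than invoking it as a black box — OR there is a genuine reduction I should find.

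The reduction I would pursue: enlarge the domain. Define $\widetilde{S_1} := \bigcupdot_{k=1}^K (S_1 + \frac{k-1}{N}\cdot 0)$… no — better, set $T := S_1$ and $T_k := S_1 + \frac{k}{N}$; by hypothesis each $T_k \subset S_2$. The multi-translate structure mimics what one gets from a single translation by $a=\frac1N$ iterated $K$ times: $S_1 + \frac{k}{N} = (S_1 + \frac{k-1}{N}) + \frac1N$. So I would apply Theorem~\ref{thm:two-sets-nested-translation-by-a} (or rather the one-step mechanism in its proof) $K$ times in a telescoping fashion: at step $k$ combine the already-built system on $S_1 \cup \bigcup_{j<k}(\text{stuff})$ with a new piece translated by $\frac1N$, each time splitting off the portion of $\Lambda_2$ lying in the coset $N\Z + c_k$. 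The key linear-algebraic fact, which is presumably the engine of Theorem~\ref{thm:two-sets-nested-translation-by-a}'s proof, is that multiplication by $e^{2\pi i (\cdot)/N}$ — equivalently, restriction/extension across the translate $S_1 + \frac1N$ — intertwines the frame operators in a way compatible with the coset decomposition of the frequency lattice, because on $L^2(S_1+\frac1N)$ the exponential $e^{2\pi i(\lambda+c_k)(\cdot)}$ with $\lambda \in N\Z$ is, after translating the domain back to $S_1$, just $e^{2\pi i c_k /N}$ times $e^{2\pi i(\lambda+c_k)(\cdot)}$ on $S_1$, a unimodular constant that does not affect the Riesz-basis property.

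The main obstacle I anticipate is bookkeeping the \emph{disjointness and distance hypotheses through the iteration}: after peeling off the coset $N\Z+c_1$, the remaining frequency set $\Lambda_1 \cup (\Lambda_2 \setminus (N\Z+c_1))$ must still satisfy a separation condition relative to the next lattice $N\Z + c_2$, and the remaining domain $S_2 \setminus (S_1 + \frac1N)$ — or rather the appropriate residual set — must still contain the required translate of the next residual $S_1$-piece. Verifying that the hypotheses of Theorem~\ref{thm:two-sets-nested-translation-by-a} are met at \emph{each} of the $K$ stages, with the correct nesting $S_1 + \frac{k}{N} \subset S_2$ guaranteeing the translate condition and $\dist(\Lambda_1, \cup_k(N\Z+c_k))>0$ guaranteeing all the intermediate distance conditions, is where the care lies; the functional-analytic content is essentially a $K$-fold application of the already-proven $K=1$, single-coset theorem. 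If a clean telescoping is not available, the fallback is to re-run the proof of Theorem~\ref{thm:two-sets-nested-translation-by-a} with $\frac{1}{a}\Z$ replaced throughout by the disjoint union $\cup_{k=1}^K(N\Z+c_k)$, checking that every step — which should only use that this set is a union of full $N$-periodic progressions and that $S_1$ translated by each coset's ``period witness'' $\frac kN$ embeds in $S_2$ — goes through verbatim.
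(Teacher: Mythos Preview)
Your proposal is not a proof but a plan with two branches, and neither branch closes as written.

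The iterative/telescoping branch has a genuine gap that you partly identify but do not resolve: to apply Theorem~\ref{thm:two-sets-nested-translation-by-a} at each stage you would need, for the piece being peeled off, that $E(\Lambda_2 \cap (N\Z+c_k))$ is complete (resp.\ a Riesz sequence) in $L^2$ of some subdomain of $S_2$, and that the remainder $E(\Lambda_2 \setminus (N\Z+c_k))$ retains the property on the complementary subdomain. Nothing in the hypotheses gives you any such splitting of $\Lambda_2$ and $S_2$; only the \emph{global} property of $E(\Lambda_2)$ on $L^2(S_2)$ is assumed. For arbitrary real $c_1,\dots,c_K$ there is no common refinement lattice either, so no single-step reduction to Theorem~\ref{thm:two-sets-nested-translation-by-a} is available. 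This route does not work.

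Your fallback --- re-running the proof of Theorem~\ref{thm:two-sets-nested-translation-by-a} with $\frac{1}{a}\Z$ replaced by $\cup_{k=1}^K(N\Z+c_k)$ --- is exactly what the paper does, but it does \emph{not} go through verbatim, and you have not identified the new ingredient. In Theorem~\ref{thm:two-sets-nested-translation-by-a} the key multiplier is $1-e^{2\pi i a x}$, which vanishes precisely on $\frac{1}{a}\Z$ and whose Fourier-side effect is a single translate $\widehat f_1(\cdot - a)$. Here one needs a multiplier vanishing precisely on $\cup_{k=1}^K(N\Z+c_k)$, namely
\[
\prod_{k=1}^K \bigl(1 - e^{2\pi i (x-c_k)/N}\bigr).
\]
When expanded, this is a polynomial of degree $K$ in $e^{2\pi i x/N}$, so on the Fourier side it produces a linear combination of the translates $\widehat f_1(\cdot - \tfrac{j}{N})$ for $j=1,\dots,K$. \emph{This} is why the hypothesis demands $S_1+\frac{k}{N}\subset S_2$ for every $k=1,\dots,K$ --- not because $\frac{k}{N}$ is a ``period witness'' of the coset $N\Z+c_k$ (the cosets have period $N$, not $\frac{1}{N}$, and their offsets $c_k$ are unrelated to the integers $k$). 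The paper carries out this construction explicitly: one writes $\widehat g = \widehat f - \widehat f_1 + \sum_{k=1}^K a_k\,\widehat f_1(\cdot-\tfrac{k}{N})$ and determines the coefficients $a_k$ by solving a $(K{-}1)\times(K{-}1)$ Vandermonde system so that the resulting multiplier equals the product above. That Vandermonde computation is the substantive new step, and your proposal does not mention it.
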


It should be noted that $\Lambda_1 \subset \R \backslash \cup_{k=1}^K (\frac{1}{a}\Z{+}c_k)$ can be an {\it arbitrary} set which is not contained in a union of lattices.
%

We point out that the main advantage of Theorem \ref{thm:RB-union-two-sets-generalization}
lies in the \emph{arbitrary} choice of $c_1, \ldots, c_K \in [0,\frac{1}{a})$. For instance, if $S_1,S_2 \subset \R$ satisfy $S_1 {+} \frac{K}{N} \subset S_2$ for some $K, N \in \N$, then one needs $\Lambda_2 \subset \frac{N}{K}\Z = \cup_{k=0}^{K-1} (N\Z{+}k\frac{N}{K})$ in order to apply Theorem \ref{thm:two-sets-nested-translation-by-a}.
However, according to Theorem \ref{thm:RB-union-two-sets-generalization}, such a requirement can be weakened to $\Lambda_2 \subset \cup_{k=1}^K (N\Z{+}c_k)$ with \emph{arbitrary} $c_1, \ldots, c_K \in [0,N)$, if the conditions $S_1 {+} \frac{k}{N} \subset S_2$, $k = 1, \ldots, K$, are satisfied.

Using Theorem \ref{thm:RB-union-two-sets-generalization}, we can construct some exponential Riesz bases which cannot be obtained from the existing methods, such as the combining trick of Kozma and Nitzan (see Lemma \ref{lem:KN15-Lemma2} below).

\begin{example-nonumber}
\rm
Let $S_2 = [\frac{1}{3}, 1)$ and $\Lambda_2 = (3\Z{+}c_1) \cup (3\Z{+}c_2)$ with any
$c_1, c_2 \in [0,3)$ satisfying
\[
\det
\begin{bmatrix}
e^{- 2\pi i c_1 / 3} & e^{- 4\pi i c_1 / 3} \\
e^{- 2\pi i c_2 / 3} & e^{- 4\pi i c_2 / 3}
\end{bmatrix}
\neq 0 \, .
\]
Then $E(\Lambda_2)$ is a Riesz basis for $L^2(S_2)$ by Proposition \ref{prop:characterization-based-on-Fourier-submatrices} below.
Now, let $S_1 \subset [0, \frac{1}{3})$ be a measurable set and let $\Lambda_1 \subset \R \backslash \Lambda_2$ be a discrete set with $\dist (\Lambda_1, \Lambda_2) > 0$ such that $E(\Lambda_1)$ is a Riesz basis for $L^2(S_1)$.
Then the sets $S_1 {+} \frac{1}{3}$ and $S_1 {+} \frac{2}{3}$ are both contained in $S_2$, so the assumption of Theorem \ref{thm:RB-union-two-sets-generalization} is fulfilled with $a = \frac{1}{3}$ and $K = 2$.
Therefore, the system $E ( \Lambda_1 \cup \Lambda_2 )$ is a Riesz basis for $L^2( S_1 \cup S_2 )$ by Theorem \ref{thm:RB-union-two-sets-generalization}.

Note that there is no requirement on the structure of $\Lambda_1$. Indeed, we only require $\Lambda_1 \subset \R \backslash \Lambda_2$ to be a discrete set with $\dist (\Lambda_1, \Lambda_2) > 0$ such that $E(\Lambda_1)$ is a Riesz basis for $L^2(S_1)$. 
In contrast, Lemma \ref{lem:KN15-Lemma2} requires
all the frequency sets to be subsets of shifted copies of $N\Z$, hence, Lemma \ref{lem:KN15-Lemma2} is not applicable here.  
\end{example-nonumber}


\begin{figure}[h!]
\centering
\begin{tikzpicture}[x=9cm,y=1cm, scale=0.7]
  \begin{scope}
    \draw[-,thick] (0,0+4) -- (1,0+4); 
      \draw[thick] (0,0.15+4) --++ (0,-0.30)
        node[below=-1] {$0$};
      \draw[thick] (1/3,0.15+4) --++ (0,-0.30)
        node[below=-1] {$\frac{1}{3}$};
      \draw[thick] (2/3,0.15+4) --++ (0,-0.30)
        node[below=-1] {$\frac{2}{3}$};
      \draw[thick] (1,0.15+4) --++ (0,-0.30)
        node[below=-1] {$1$};

      \draw[thick] (1/15,0.08+4) --++ (0,-0.16);
      \draw[thick] (1/2/pi,0.08+4) --++ (0,-0.16);
      \draw[thick] (3/15,0.08+4) --++ (0,-0.16);
      \draw[thick] (4/15,0.08+4) --++ (0,-0.16);

\draw[red, line width = 1.5, -] (1/15,0.04+4) -- (1/2/pi,0.04+4);
\draw[red, line width = 1.5, -] (3/15,0.04+4) -- (4/15,0.04+4);

\draw[blue, line width = 1.5, -] (1/3,0.04+4) -- (1,0.04+4);

      \node[scale=1.2] at (1/15/2+1/2/pi/2+0.05, 0.6+4) {$S_1$};
      \node[scale=1.2] at (2/3,0.6+4) {$S_2$};


      \node[scale=1] at (1/15/2+1/2/pi/2+0.02, 0.4+2) {$\Lambda_1 \subset \R \backslash \Lambda_2$};

      \node[scale=1] at (2/3+0.07,0.4+2) {$\Lambda_2 = (3\Z{+}c_1) \cup (3\Z{+}c_2)$};

  \end{scope}
\end{tikzpicture}
\caption{An example of sets $S_1$ and $S_2$ together with $\Lambda_1$ and $\Lambda_2$}
\end{figure}
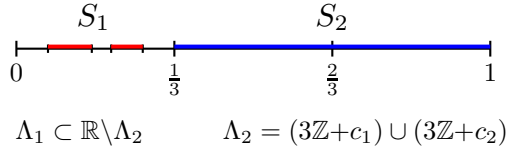

As a partial converse of Theorem \ref{thm:two-sets-nested-translation-by-a} and Theorem \ref{thm:RB-union-two-sets-generalization}, we have:

\begin{theorem}\label{thm:RB-union-two-sets-generalization-converse}
(a) Let $S_1, S_2 \subset \R$ be disjoint bounded measurable sets with $S_1 {+} a \subset S_2$ for some $a \in \R \backslash \{ 0 \}$.
If there exist sets $\Lambda_1 \subset \R \backslash \frac{1}{a}\Z$ and $\Lambda_2 \subset \frac{1}{a}\Z$ such that $E ( \Lambda_1 \cup \Lambda_2 )$ is complete in $L^2( S_1 \cup S_2 )$,
then $E(\Lambda_1)$ is complete in $L^2(S_1)$. \\
(b) Let $S_1, S_2 \subset \R$ be disjoint bounded measurable sets satisfying 
$S_1 {+} ka \subset S_2$ for $k = 1, \ldots, K$, with some $a \in \R \backslash \{ 0 \}$ and $K \in \N$.
Let $c_1, \ldots, c_K \in [0,\frac{1}{a})$ be distinct real numbers, and assume that
there exist sets $\Lambda_1 \subset \R \backslash \cup_{k=1}^K (\frac{1}{a}\Z{+}c_k)$ and $\Lambda_2 \subset \cup_{k=1}^K (\frac{1}{a}\Z{+}c_k)$
such that $E ( \Lambda_1 \cup \Lambda_2 )$ is complete in $L^2( S_1 \cup S_2 )$.
Then $E(\Lambda_1)$ is complete in $L^2(S_1)$.
\end{theorem}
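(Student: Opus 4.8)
The plan is to establish the one underlying principle behind both parts: \emph{completeness descends from the union to $S_1$}. In either case it suffices to show that every $g \in L^2(S_1)$ with $\langle g, e^{2\pi i \lambda (\cdot)} \rangle_{L^2(S_1)} = 0$ for all $\lambda \in \Lambda_1$ must vanish. Given such a $g$, I would construct a function $f \in L^2(S_1 \cup S_2)$ that is orthogonal to the whole system $E(\Lambda_1 \cup \Lambda_2)$ in $L^2(S_1 \cup S_2)$; the assumed completeness of $E(\Lambda_1 \cup \Lambda_2)$ then forces $f \equiv 0$, and since $f$ will coincide with $g$ on $S_1$ (its correction terms being supported inside $S_2$, which is disjoint from $S_1$), this gives $g \equiv 0$, as desired.

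For part (b), the construction is, with scalars $\alpha_1, \dots, \alpha_K$ to be fixed later,
\[
  f := g \, \mathds{1}_{S_1} \;+\; \sum_{j=1}^{K} \alpha_j \, \bigl( g(\cdot - \tfrac{j}{N}) \bigr) \, \mathds{1}_{S_1 + \frac{j}{N}} ,
\]
which lies in $L^2(S_1 \cup S_2)$ since $S_1$ is bounded and each $S_1 + \frac{j}{N} \subset S_2$ by hypothesis; the translated copies $S_1 + \frac{j}{N}$ need not be pairwise disjoint, but this is harmless because one only integrates the finite sum of the functions $\alpha_j \bigl( g(\cdot - \tfrac{j}{N}) \bigr) \mathds{1}_{S_1 + j/N}$. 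The elementary point is that translation by $\tfrac{j}{N}$ sends $e^{2\pi i \lambda (\cdot)}$ to $e^{2\pi i \lambda j / N} e^{2\pi i \lambda (\cdot)}$, and the phase $e^{2\pi i \lambda j / N}$ equals $e^{2\pi i c_k j / N}$ as soon as $\lambda \in N\Z + c_k$. Hence, for $\lambda \in \Lambda_1$ each term of $\langle f, e^{2\pi i \lambda (\cdot)} \rangle_{L^2(S_1 \cup S_2)}$ is a scalar multiple of $\langle g, e^{2\pi i \lambda (\cdot)} \rangle_{L^2(S_1)} = 0$, so $f \perp E(\Lambda_1)$ automatically; while for $\lambda \in \Lambda_2$, say $\lambda \in N\Z + c_k$, substituting $t \mapsto t + \tfrac{j}{N}$ in each correction term gives
\[
  \bigl\langle f, \, e^{2\pi i \lambda (\cdot)} \bigr\rangle_{L^2(S_1 \cup S_2)} = \Bigl( 1 + \sum_{j=1}^{K} \alpha_j \, e^{-2\pi i c_k j / N} \Bigr) \, \bigl\langle g, \, e^{2\pi i \lambda (\cdot)} \bigr\rangle_{L^2(S_1)} .
\]

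It then remains to choose $\alpha_1, \dots, \alpha_K$ so that $\sum_{j=1}^{K} \alpha_j e^{-2\pi i c_k j / N} = -1$ for every $k = 1, \dots, K$. This is a $K \times K$ linear system whose coefficient matrix is $\bigl[ w_k^{\,j} \bigr]_{k,j=1}^{K}$ with $w_k := e^{-2\pi i c_k / N}$; pulling $w_k$ out of the $k$-th row exhibits its determinant as $\bigl( \prod_k w_k \bigr) \prod_{1 \le k < k' \le K} ( w_{k'} - w_k )$, a Vandermonde-type product that is nonzero precisely because the $w_k$ are nonzero and pairwise distinct --- and they are pairwise distinct exactly because $c_1, \dots, c_K \in [0,N)$ are distinct, hence distinct modulo $N$. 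With such $\alpha_j$ one gets $f \perp E(\Lambda_1 \cup \Lambda_2)$, completeness gives $f \equiv 0$, and restriction to $S_1$ gives $g \equiv 0$, proving (b). Part (a) is the same construction with a single correction term: take $f := g \, \mathds{1}_{S_1} - \bigl( g(\cdot - a) \bigr) \mathds{1}_{S_1 + a}$ and use that $e^{-2\pi i \lambda a} = 1$ for $\lambda \in \tfrac{1}{a}\Z$, so no linear algebra is needed there.

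I expect the conceptual heart to be simply hitting on the right $f$ --- placing signed, phase-corrected translates of $g$ into the copies of $S_1$ guaranteed inside $S_2$ --- after which the only computation that is not pure bookkeeping is the invertibility of the coefficient matrix in (b), which is exactly where the distinctness hypothesis on $c_1, \dots, c_K$ enters. The remaining care is routine: verifying that $f$ genuinely lies in $L^2(S_1 \cup S_2)$, that the translated copies sit inside $S_2$ and thus away from $S_1$, and that $\Lambda_1 \cap \Lambda_2 = \varnothing$ automatically from $\Lambda_1 \subset \R \backslash \cup_k (N\Z + c_k)$ and $\Lambda_2 \subset \cup_k (N\Z + c_k)$ (resp.\ from $\Lambda_1 \subset \R \backslash \frac{1}{a}\Z$, $\Lambda_2 \subset \frac{1}{a}\Z$ in part (a)).
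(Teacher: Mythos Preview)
Your proof is correct and follows essentially the same construction as the paper: the paper works on the Paley--Wiener side, starting from $f\in PW(S_1)$ with $f(\lambda)=0$ on $\Lambda_1$ and defining $\widehat g(\omega)=\widehat f(\omega)-\sum_{j=1}^K a_j\,\widehat f(\omega-\tfrac{j}{N})$, which is exactly your $f$ after applying the Fourier transform and swapping names. The one notable presentational difference is the endgame: the paper deduces $g\equiv 0$, hence $f(x)\prod_k(1-e^{2\pi i(x-c_k)/N})=0$, and then invokes analyticity of $f$ to conclude $f\equiv0$; your argument bypasses this by simply restricting the vanishing $L^2$ function to $S_1$, which is slightly cleaner.
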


Using Theorem \ref{thm:RB-union-two-sets-generalization-converse}(a), one can deduce the following.  
Let $S_1 = [0,\frac{1}{2})$ and $S_2 = [\frac{1}{2},1)$.
If there is a set $\Lambda_1 \subset \R \backslash 2\Z$ such that $E ( \Lambda_1 \cup 2\Z )$ is a Riesz basis for $L^2( S_1 \cup S_2 )$, then $E(\Lambda_1)$ is necessarily complete in $L^2(S_1)$ by Theorem \ref{thm:RB-union-two-sets-generalization-converse}(a).
This is in clear contrast with the counterexample for Question~\ref{Q2} discussed on p.\,2,
where the sets $\Lambda_1 := \Lambda^{(1)} \backslash \{ 0 \}$ and $\Lambda_2 := \Lambda^{(2)}$ are disjoint and satisfy that $E(\Lambda_2)$ and $E(\Lambda_1 \cup \Lambda_2 )$ are Riesz bases for $L^2(S_2)$ and $L^2( S_1 \cup S_2 )$ respectively, but $E(\Lambda_1)$ is incomplete in $L^2(S_1)$. 
Theorem \ref{thm:RB-union-two-sets-generalization-converse}(a) shows that such pathological examples cannot occur if $\Lambda_2 = 2 \Z$.


As a final remark, we point out that 
Theorems~\ref{thm:two-sets-nested-translation-by-a}, \ref{thm:RB-union-two-sets-generalization} and \ref{thm:RB-union-two-sets-generalization-converse} can be easily generalized to higher dimensions. 
For instance, Theorem~\ref{thm:two-sets-nested-translation-by-a} can be generalized to dimension $d$ as follows.

For $(a_1, \ldots, a_d) \in \R^d$, we define the set
\[
\begin{split}
\Gamma_{a_1, \ldots, a_d } &= \{ (x_1, \ldots, x_d ) \in \R^d : e^{2\pi i (a_1 x_1 + \cdots + a_d x_d )} = 1 \}  \\
&=  \{  (x_1, \ldots, x_d ) \in \R^d  : a_1 x_1 + \cdots + a_d x_d  \in \Z  \} 
\end{split}
\]
which is an additive closed subgroup of $\mathbb{R}^d$. 
For instance, in the $d=2$ case, 
\[
\Gamma_{0,0} =  \{ (x, y)  \in \R^2 : 0  x + 0   y \in \Z \}  = \R^2
\]
is the full $2$-dimensional plane, 
\[
\Gamma_{2,0} = \{ (x, y) : 2 x + 0  y \in \Z \} =  ( \tfrac{1}{2} \Z ) \times \R 
\]
is the union of all vertical lines whose $x$-intercept is a half-integer, and 
\[
\Gamma_{2,1} = \{ (x, y) : 2 x +  y \in \Z \}  = \{ (x, y) \in \R^2 : y = - 2x+ n, \; n \in \Z \}
\]
is the union of all lines having slope $-2$ and integer $y$-intercept.

\begin{theorem}\label{thm:two-sets-nested-translation-by-a-dimension-d}
Let $S_1, S_2 \subset \R^d$ be disjoint bounded measurable sets satisfying $S_1 {+} (a_1, \ldots, a_d) \subset S_2$ for some $(a_1, \ldots, a_d) \in \R^d \backslash \{ (0,\ldots,0) \}$. 
If there exist sets $\Lambda_1 \subset \R^d \backslash \Gamma_{a_1, \ldots, a_d }$ and $\Lambda_2 \subset \Gamma_{a_1, \ldots, a_d }$ with $\dist (\Lambda_1, \Gamma_{a_1, \ldots, a_d } ) > 0$ such that for each $\ell = 1,2$, the system $E(\Lambda_\ell)$ is a Riesz basis (resp.~a complete sequence, a Riesz sequence) for $L^2(S_\ell)$, then $E ( \Lambda_1 \cup \Lambda_2 )$ is a Riesz basis (resp.~a complete sequence, a Riesz sequence) for $L^2( S_1 \cup S_2 )$.
\end{theorem}



\subsection{Organization of the paper}
\label{subsec:organization-paper}

In Section \ref{sec:prelim}, we review some necessary background on exponential systems and give some examples.
In Section \ref{sec:comparison-existing-work}, we discuss some known methods for combining exponential Riesz bases and compare them with our results.
Sections \ref{sec:Proof-of-thm:two-sets-nested-translation-by-a}, \ref{sec:Proof-of-thm:RB-union-two-sets-generalization}, \ref{sec:Proof-of-prop:RB-union-two-sets-generalization-converse} and \ref{sec:Proof-of-thm:two-sets-nested-translation-by-a-dimension-d} are devoted to the proof of
Theorems \ref{thm:two-sets-nested-translation-by-a}, \ref{thm:RB-union-two-sets-generalization}, \ref{thm:RB-union-two-sets-generalization-converse} and \ref{thm:two-sets-nested-translation-by-a-dimension-d}, respectively.
Section \ref{sec:appendix-proof-prop-characterization-based-on-Fourier-submatrices} provides a proof of Proposition \ref{prop:characterization-based-on-Fourier-submatrices} for self-containedness of the paper.

\section{Preliminaries}
\label{sec:prelim}


\begin{definition}\label{def:HilbertSpace-Sequences}
\rm
A sequence $\{ f_n \}_{n\in\Z}$ in a separable Hilbert space $\mathcal{H}$ is called
\begin{itemize}
\item a \emph{Bessel sequence} in $\mathcal{H}$ (with a Bessel bound $B$) if there is a constant $B>0$ such that
$\sum_{n\in\Z} |\langle f , f_n\rangle|^{2}
\leq B \, \| f \|^2$ for all $f \in \mathcal{H}$;

\item  a \emph{frame} for $\mathcal{H}$ (with frame bounds $A$ and $B$) if there are constants $0 < A \leq B < \infty$ such that
$A \, \| f \|^2
\leq \sum_{n\in\Z} |\langle f , f_n\rangle|^{2}
\leq B \, \| f \|^2$ for all $f \in \mathcal{H}$;

\item  a \emph{Riesz sequence} in $\mathcal{H}$ (with Riesz bounds $A$ and $B$) if there are constants $0 < A \leq B < \infty$ such that
$A \, \| \bc \|_{\ell_2}^2
\leq \| \sum_{n\in\Z} c_n \, f_n \|^2
\leq B \, \| \bc \|_{\ell_2}^2$ for all $\bc = \{c_n\}_{n\in\Z} \in \ell_2 (\mathbb Z)$;

\item  a \emph{Riesz basis} for $\mathcal{H}$ if it is a complete Riesz sequence in $\mathcal{H}$.
\end{itemize}
\end{definition}

It is well-known that a sequence in $\mathcal{H}$ is a Riesz basis if and only if it is both a frame and a Riesz sequence (see e.g., \cite[Proposition 3.7.3, Theorems 5.4.1 and 7.1.1]{Ch16} or \cite[Lemma 1]{KN15}).


Let $\Lambda \subset \R$ be a discrete set and let $S \subset \R$ be a positive measure set.
If $E(\Lambda) := \{e^{2\pi i \lambda (\cdot)} : \lambda\in\Lambda\}$ is a Riesz sequence in $L^2(S)$, then $\Lambda$ is necessarily separated, i.e., $\inf\{ |\lambda - \lambda'| : \lambda \neq \lambda' \in \Lambda \} > 0$ (see e.g., \cite[Proposition 11]{Le21}).
Conversely, if $\Lambda \subset \R$ is separated and $S \subset \R$ is bounded, then $E(\Lambda)$ is a Bessel sequence in $L^2(S)$ (see \cite[p.\,135, Theorem 4]{Yo01}).


\begin{proposition}[see e.g., Lemma 8 in \cite{Le21}]
\label{prop:RB-basic-operations}
Let $\Lambda \subset \R$ be a discrete set and let $S \subset \R$ be a measurable set such that $E(\Lambda)$ is a Riesz basis for $L^2(S)$ with bounds $0 < A \leq B < \infty$.
Then for any $a,b \in \R$ and $c > 0$, the system $E(c \Lambda+a)$ is a Riesz basis for $L^2(\frac{1}{c} S +b)$ with bounds $\frac{A}{c}$ and $\frac{B}{c}$.
Moreover, the system $E(- \Lambda)$ is a Riesz basis for $L^2(- S)$ with bounds $A$ and $B$.
\end{proposition}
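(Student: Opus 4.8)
The plan is to realize the transformation $(\Lambda,S)\mapsto(c\Lambda+a,\tfrac1c S+b)$ as a composition of elementary maps and to invoke three standard facts about Riesz bases in a Hilbert space: (i) a unitary operator sends a Riesz basis to a Riesz basis with the \emph{same} bounds; (ii) multiplying every element of a Riesz basis with bounds $A,B$ by a fixed scalar $\alpha\neq0$ yields a Riesz basis with bounds $|\alpha|^2 A,\,|\alpha|^2 B$; and (iii) multiplying individual elements of a Riesz basis by unimodular constants changes neither the basis property nor the bounds. All three are immediate from Definition~\ref{def:HilbertSpace-Sequences}, since completeness and the two-sided $\ell_2$-inequality are manifestly preserved in each case.

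First I would treat the dilation. Define $U\colon L^2(S)\to L^2(\tfrac1c S)$ by $(Uf)(t)=\sqrt{c}\,f(ct)$; the change of variables $s=ct$ shows that $U$ is unitary, and $U$ carries $e^{2\pi i\lambda(\cdot)}$ to $\sqrt{c}\,e^{2\pi i(c\lambda)(\cdot)}$. By (i), $\{\sqrt{c}\,e^{2\pi i(c\lambda)(\cdot)}\}_{\lambda\in\Lambda}$ is a Riesz basis for $L^2(\tfrac1c S)$ with bounds $A,B$; dividing every element by $\sqrt{c}$ and using (ii) with $\alpha=1/\sqrt{c}$ shows that $E(c\Lambda)$ is a Riesz basis for $L^2(\tfrac1c S)$ with bounds $\tfrac{A}{c},\,\tfrac{B}{c}$.

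Next I would apply, in turn, the modulation $M_a f=e^{2\pi i a(\cdot)}f$, which is unitary on $L^2(\tfrac1c S)$ and sends $e^{2\pi i\mu(\cdot)}$ to $e^{2\pi i(\mu+a)(\cdot)}$, giving that $E(c\Lambda+a)$ is a Riesz basis for $L^2(\tfrac1c S)$ with the same bounds; and then the translation $T_b f=f(\cdot-b)$, which is unitary from $L^2(\tfrac1c S)$ onto $L^2(\tfrac1c S+b)$ and sends $e^{2\pi i\nu(\cdot)}$ to the unimodular multiple $e^{-2\pi i\nu b}\,e^{2\pi i\nu(\cdot)}$, so that by (i) and (iii) the system $E(c\Lambda+a)$ is a Riesz basis for $L^2(\tfrac1c S+b)$ with bounds $\tfrac{A}{c},\,\tfrac{B}{c}$. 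For the reflection statement, the map $(Rf)(t)=f(-t)$ is unitary from $L^2(S)$ onto $L^2(-S)$ and carries $e^{2\pi i\lambda(\cdot)}$ to $e^{2\pi i(-\lambda)(\cdot)}$, so $E(-\Lambda)$ is a Riesz basis for $L^2(-S)$ with bounds $A,B$ by (i). The argument has no genuine obstacle; the only point needing care is the bookkeeping in the dilation step --- that the $L^2$-normalizing factor $\sqrt{c}$ must be extracted and that it rescales both bounds by $1/c$. I would also note that only measurability of $S$ (no boundedness) is used, and that the same chain of unitaries and rescalings proves the analogous statements for complete sequences and for Riesz sequences, since each of (i)--(iii) respects the relevant half of Definition~\ref{def:HilbertSpace-Sequences}.
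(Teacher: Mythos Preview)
Your proof is correct and complete; the unitary-operator decomposition (dilation, modulation, translation, reflection) together with the elementary rescaling fact handles each transformation cleanly, and the bound bookkeeping in the dilation step is right. Note, however, that the paper does not supply its own proof of this proposition --- it is stated with a citation to Lemma~8 in \cite{Le21} and treated as a standard background fact --- so there is no in-paper argument to compare against.
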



\begin{theorem}[Kadec's $1/4$ theorem, see e.g., p.\,36 in \cite{Yo01}]
\label{thm:Kadec}
Let $c > 0$ and $0 < \delta < \frac{1}{4}$.
If a sequence $\Lambda = \{ \lambda_n \}_{n \in \Z} \subset \R$ satisfies $|\lambda_n - c n | \leq c  \delta$ for all $n \in \Z$,
then $E(\Lambda)$ is a Riesz basis for $L^2[0,1/c)$.
\end{theorem}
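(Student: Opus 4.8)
\section*{Proof proposal for Theorem \ref{thm:Kadec}}

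The plan is to reduce to a perturbation statement about the canonical orthonormal exponential basis. By Proposition \ref{prop:RB-basic-operations}, $E(\Lambda)$ is a Riesz basis for $L^2[0,1/c)$ if and only if $E(\tfrac1c\Lambda)$ is a Riesz basis for $L^2[0,1)$; since $|\tfrac1c\lambda_n - n|\le\delta$, it suffices to treat the case $c=1$, and, translating the domain to $[-\tfrac12,\tfrac12)$ (again permissible by Proposition \ref{prop:RB-basic-operations}), to prove: if $\lambda_n = n + \delta_n$ with $|\delta_n|\le\delta<\tfrac14$, then $E(\Lambda)=\{e^{2\pi i\lambda_n t}\}_{n\in\Z}$ is a Riesz basis for $L^2[-\tfrac12,\tfrac12)$, where $\{e^{2\pi int}\}_{n\in\Z}$ is an orthonormal basis. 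I would run the Paley--Wiener stability argument: let $U\colon\ell^2(\Z)\to L^2[-\tfrac12,\tfrac12)$ be the unitary with $Ue_n=e^{2\pi int}$, and let $V$ be the synthesis operator $Ve_n=e^{2\pi i\lambda_n t}$, which is bounded since $\Lambda$ (being a $\delta$-perturbation of $\Z$ with $\delta<\tfrac12$) is separated and the domain is bounded. If $\|U-V\|_{\ell^2\to L^2}<1$, then $U^{-1}V=I-U^{-1}(U-V)$ is invertible by a Neumann series, so $V$ is a topological isomorphism onto $L^2[-\tfrac12,\tfrac12)$, i.e.\ $E(\Lambda)$ is a Riesz basis. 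Hence it suffices to find $\rho<1$ with
\[
\Bigl\|\sum_n c_n\bigl(e^{2\pi int}-e^{2\pi i\lambda_n t}\bigr)\Bigr\|_{L^2[-\frac12,\frac12)}\;\le\;\rho\,\Bigl(\sum_n|c_n|^2\Bigr)^{1/2}
\]
for every finitely supported scalar sequence $\{c_n\}$.

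Next comes Kadec's device. Factoring $e^{2\pi int}-e^{2\pi i\lambda_n t}=e^{2\pi int}\bigl(1-e^{2\pi i\delta_n t}\bigr)$ and splitting $1-e^{2\pi i\delta_n t}=(1-\cos2\pi\delta_n t)-i\sin2\pi\delta_n t$, I would expand $\cos2\pi\delta_n t$ and $\sin2\pi\delta_n t$ in the trigonometric orthonormal basis of $L^2[-\tfrac12,\tfrac12)$, using the explicit coefficients: constant term $\tfrac{\sin\pi\delta_n}{\pi\delta_n}$, the $\cos2\pi kt$-coefficient $\tfrac{(-1)^{k+1}2\delta_n\sin\pi\delta_n}{\pi(k^2-\delta_n^2)}$, and the $\sin2\pi kt$-coefficient $\tfrac{(-1)^{k+1}2k\sin\pi\delta_n}{\pi(k^2-\delta_n^2)}$. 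Substituting, and using $e^{2\pi int}\cos2\pi kt=\tfrac12(e^{2\pi i(n+k)t}+e^{2\pi i(n-k)t})$ and the analogous identity for the sine, the left-hand side above decomposes into a constant-term contribution, a cosine contribution, and a sine contribution.

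For the constant and cosine contributions the $k$-th coefficient decays like $k^{-2}$, so I can pull the sum over $k$ out, apply the triangle inequality together with the fact that an index shift is an isometry of $\ell^2$, and bound each coefficient by its supremum over $|\delta_n|\le\delta$ (elementary monotonicity). Evaluating the resulting total by the partial-fraction identity $\pi\cot\pi\delta=\tfrac1\delta-\sum_{k\ge1}\tfrac{2\delta}{k^2-\delta^2}$ gives exactly $\bigl(1-\tfrac{\sin\pi\delta}{\pi\delta}\bigr)+\bigl(\tfrac{\sin\pi\delta}{\pi\delta}-\cos\pi\delta\bigr)=1-\cos\pi\delta$ times $(\sum|c_n|^2)^{1/2}$. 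For the sine contribution $\bigl\|\sum_n c_n\sin(2\pi\delta_n t)e^{2\pi int}\bigr\|$, the naive term-by-term estimate diverges, since the $\sin2\pi kt$-coefficients decay only like $k^{-1}$; one must keep the sum over $k$ intact and exploit the cancellation carried by the alternating factor $(-1)^{k+1}$. When all $\delta_n$ are equal this is clear: $\sum_n c_n\sin(2\pi\delta t)e^{2\pi int}=\sin(2\pi\delta t)\sum_n c_n e^{2\pi int}$ and $|\sin2\pi\delta t|\le\sin\pi\delta$ on $[-\tfrac12,\tfrac12)$, giving the bound $\sin\pi\delta\,(\sum|c_n|^2)^{1/2}$; in general, after regrouping by output frequency, the operator sending $\{c_n\}$ to the Fourier coefficients of the sine contribution is controlled via the identity $\sum_{k\ge1}\tfrac{(-1)^{k+1}2k\sin\pi\delta}{\pi(k^2-\delta^2)}\sin2\pi kt=\sin2\pi\delta t$ together with the $\ell^2$-boundedness of the discrete Hilbert transform, yielding again the bound $\sin\pi\delta\,(\sum|c_n|^2)^{1/2}$. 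Adding the two contributions gives $\rho=1-\cos\pi\delta+\sin\pi\delta$, and $\rho<1$ holds precisely when $\sin\pi\delta<\cos\pi\delta$, i.e.\ $\pi\delta<\pi/4$, i.e.\ $\delta<\tfrac14$ --- exactly the hypothesis.

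The main obstacle is the sine contribution: because the relevant Fourier coefficients fail to be summable, a term-by-term estimate is hopeless, and the whole content of Kadec's argument is the bookkeeping that converts the alternating signs into genuine cancellation (equivalently, identifies a bounded Fourier multiplier of norm $\sin\pi\delta$). Everything else --- the scaling and translation reductions, the Neumann-series perturbation principle, and the constant/cosine estimates --- is routine.
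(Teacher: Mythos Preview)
The paper does not supply its own proof of Kadec's theorem; it quotes the result with a reference to Young's book \cite{Yo01}, so the comparison is against the classical argument there. Your reduction to $c=1$ on $[-\tfrac12,\tfrac12)$, the Paley--Wiener/Neumann-series perturbation scheme, and your handling of the constant and cosine contributions are all correct and match the classical proof; the value $1-\cos\pi\delta$ for those terms is right. The gap is in the sine contribution. You expand $\sin 2\pi\delta_n t$ in the integer sine system $\{\sin 2\pi kt\}$, obtain coefficients that decay only like $1/k$, and then invoke ``the $\ell^2$-boundedness of the discrete Hilbert transform'' to claim the bound $\sin\pi\delta$. That step is not justified: when the $\delta_n$ vary with $n$, the resulting matrix has entries of the form $\frac{1}{m-n\pm\delta_n}$ with a column-dependent shift, so it is not Toeplitz, and while comparison with the Hilbert transform does give boundedness, it does not deliver the exact constant $\sin\pi\delta$. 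Any loss in this constant is fatal, since $1-\cos\pi\delta+\sin\pi\delta\to1^-$ as $\delta\uparrow\tfrac14$; a bound of the form $\sin\pi\delta\,(1+\varepsilon)$ would only prove the theorem for $\delta$ below some threshold strictly smaller than $\tfrac14$.

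Kadec's actual device---and the whole point of the proof in \cite{Yo01}---is to sidestep this difficulty by expanding the odd part $\sin 2\pi\delta_n t$ not in the integer sines but in the \emph{half-integer} sine system $\{\sin 2\pi(k-\tfrac12)t:k\ge1\}$, which together with $\{1,\cos 2\pi kt:k\ge1\}$ still forms a complete orthogonal system on $[-\tfrac12,\tfrac12)$. In that basis the coefficients of $\sin 2\pi\delta_n t$ have magnitude $\dfrac{2|\delta_n|\,|\cos\pi\delta_n|}{\pi\bigl((k-\tfrac12)^2-\delta_n^2\bigr)}$, which decay like $1/k^2$ and are therefore absolutely summable; the tangent partial-fraction identity $\sum_{k\ge1}\dfrac{1}{(k-\tfrac12)^2-\delta^2}=\dfrac{\pi\tan\pi\delta}{2\delta}$ then sums them (after taking the supremum over $|\delta_n|\le\delta$) to exactly $\sin\pi\delta$. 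With summable coefficients the triangle inequality applies term by term, exactly as you do for the cosine part, and no Hilbert-transform cancellation is needed. This choice of basis is the missing idea.
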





Fix any $N \in \N$.
For any $\mathcal{K} = \{ c_1, \ldots, c_K \}$ with distinct real numbers $c_1, \ldots , c_K \in [0,N)$, and any $\mathcal{L} \subset \Z_N := \{ 0, 1, \ldots, N{-}1 \}$ with $L := |\mathcal{L}|$, we define
\[
W_{\mathcal{K},\mathcal{L}}
:= \big[
e^{- 2\pi i c_k \ell / N}
\big]_{c_k \in \mathcal{K}, \, \ell \in \mathcal{L}}
\quad \in \C^{ K \times L } .
\]

\begin{proposition}\label{prop:characterization-based-on-Fourier-submatrices}
Fix any $N \in \N$.
Let $\Omega \subset N\Z$ and $S \subset [0,\frac{1}{N})$ be such that $E(\Omega)$ is a Riesz basis for $L^2(S)$ with lower Riesz bound $A > 0$.
Let $\mathcal{K} = \{ c_1, \ldots, c_K \}$ with distinct real numbers $c_1, \ldots , c_K \in [0,N)$, and $\mathcal{L} \subset \Z_N$ with cardinality $L$.
Then $E(\Omega {+} \mathcal{K}) = E(\cup_{k=1}^K \Omega {+} c_k)$ is
\begin{itemize}
\item[(a)] a frame for $L^2(\cup_{\ell \in \mathcal{L}} \, S{+}\frac{\ell}{N})$ if and only if $W_{\mathcal{K},\mathcal{L}}$ is injective;

\item[(b)] a Riesz sequence in $L^2(\cup_{\ell \in \mathcal{L}} \, S{+}\frac{\ell}{N})$ if and only if $W_{\mathcal{K},\mathcal{L}}$ is surjective;

\item[(c)] a Riesz basis for $L^2(\cup_{\ell \in \mathcal{L}} \, S{+}\frac{\ell}{N})$ if and only if $W_{\mathcal{K},\mathcal{L}}$ is bijective.
\end{itemize}
Moreover, the corresponding lower frame/Riesz bound is given by $A \, \sigma_{\min}^2 (W_{\mathcal{K},\mathcal{L}})$, where
$\sigma_{\min} (W_{\mathcal{K},\mathcal{L}})$ is the smallest singular value of $W_{\mathcal{K},\mathcal{L}}$.
\end{proposition}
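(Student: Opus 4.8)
The plan is to reduce everything to a single change of variables and an orthogonal-decomposition argument over the cosets $S+\frac{\ell}{N}$, $\ell\in\mathcal L$. First I would rescale: by Proposition~\ref{prop:RB-basic-operations}, since $E(\Omega)$ is a Riesz basis for $L^2(S)$ with $\Omega\subset N\Z$ and $S\subset[0,\frac1N)$, the system $E(\frac1N\Omega)$ is a Riesz basis for $L^2(NS)$ with $NS\subset[0,1)$, and $\frac1N\Omega\subset\Z$. So, after this normalization, it suffices to treat the case $N=1$: we have $\Omega'\subset\Z$, $S'\subset[0,1)$, $E(\Omega')$ a Riesz basis for $L^2(S')$ with lower bound $A$, shifts $c_1,\dots,c_K\in[0,1)$ and $\mathcal L\subset\Z$ (the role of $\Z_N$ becomes $\{0\}$ after rescaling — more precisely one keeps the integer shifts $\ell\in\mathcal L$ as integers and the domain pieces are $S'+\ell$, which are disjoint since $S'\subset[0,1)$). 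The key structural fact I would exploit: for $f\in L^2\!\big(\bigcup_{\ell\in\mathcal L}(S'+\ell)\big)$, writing $f_\ell:=f\cdot\mathds 1_{S'+\ell}$ translated back to $S'$, the inner products $\langle f, e^{2\pi i(\omega+c_k)(\cdot)}\rangle$ over the big domain decompose as a sum over $\ell$ of $e^{2\pi i c_k\ell}$ times $\langle f_\ell, e^{2\pi i\omega(\cdot)}\rangle_{L^2(S')}$, because $e^{2\pi i(\omega+c_k)(t+\ell)} = e^{2\pi i\omega\ell}e^{2\pi i c_k\ell}e^{2\pi i(\omega+c_k)t}$ and $e^{2\pi i\omega\ell}=1$ as $\omega\in\Omega'\subset\Z$, $\ell\in\Z$. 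Thus the analysis operator of $E(\Omega'+\mathcal K)$ on the big space factors, coset-by-coset in the frequency variable $\omega$, through the $K\times L$ matrix $W_{\mathcal K,\mathcal L}$ acting on the vector $(\langle f_\ell, e^{2\pi i\omega(\cdot)}\rangle)_{\ell\in\mathcal L}$.

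Concretely, I would fix $\omega\in\Omega'$ and regard the map $v=(v_\ell)_{\ell\in\mathcal L}\mapsto \big(\sum_{\ell}e^{2\pi i c_k\ell}v_\ell\big)_{c_k\in\mathcal K} = W_{\mathcal K,\mathcal L}\,v$. The modulation-by-$c_k$ functions $\{e^{2\pi i c_k(\cdot)}\}$ on $L^2(S')$ need not be orthogonal, but the essential point is that they are \emph{the same} finite collection of modulations on each coset, so combining the per-$\omega$ estimates: for any $f$ on the big domain,
\[
\sum_{k=1}^{K}\sum_{\omega\in\Omega'}\big|\langle f, e^{2\pi i(\omega+c_k)(\cdot)}\rangle\big|^2
= \sum_{\omega\in\Omega'} \big\| W_{\mathcal K,\mathcal L}\, v(\omega)\big\|^2,
\qquad v(\omega):=\big(\langle f_\ell, e^{2\pi i\omega(\cdot)}\rangle_{L^2(S')}\big)_{\ell\in\mathcal L}.
\]
Now I would use that $E(\Omega')$ is a Riesz basis for $L^2(S')$ with lower bound $A$ and some upper bound $B$: the map $f_\ell\mapsto (\langle f_\ell,e^{2\pi i\omega(\cdot)}\rangle)_{\omega\in\Omega'}$ is a bi-Lipschitz linear isomorphism from $L^2(S')$ onto $\ell^2(\Omega')$ with constants $\sqrt A,\sqrt B$. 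Interchanging the order of summation, $\sum_{\omega}\|W v(\omega)\|^2$ lies between $\sigma_{\min}^2(W)\sum_\omega\|v(\omega)\|^2$ and $\sigma_{\max}^2(W)\sum_\omega\|v(\omega)\|^2$, and $\sum_\omega\|v(\omega)\|^2=\sum_{\ell\in\mathcal L}\sum_\omega|\langle f_\ell,e^{2\pi i\omega(\cdot)}\rangle|^2$ is in turn comparable to $\sum_{\ell}\|f_\ell\|^2 = \|f\|^2$ with constants $A,B$. This chain gives: $E(\Omega'+\mathcal K)$ satisfies the frame lower bound $\Leftrightarrow \sigma_{\min}(W)>0 \Leftrightarrow W$ injective, and its lower frame/Riesz bound is exactly $A\,\sigma_{\min}^2(W)$; likewise the Riesz-sequence (lower $\ell^2$) bound corresponds to $W$ being surjective (equivalently the adjoint injective), and being a Riesz basis to $W$ bijective. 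I would phrase the Riesz-sequence direction by the standard duality: $E(\Omega'+\mathcal K)$ is a Riesz sequence iff its synthesis operator is bounded below, which via the same factorization is controlled by $\sigma_{\min}(W^{*})$ on the range, i.e.\ by surjectivity of $W$; here one also needs that $E(\Omega'+\mathcal K)$ is automatically Bessel, which holds because $\Omega'+\mathcal K$ is separated (as $\dist(\Omega',\frac1N\Z\text{-type shifts})$, or directly: finitely many separated translates of a separated set in a bounded domain) — or one simply cites the Bessel fact recorded before Proposition~\ref{prop:RB-basic-operations}. Part (c) is then immediate from (a)+(b) and the fact (quoted right after Definition~\ref{def:HilbertSpace-Sequences}) that a system is a Riesz basis iff it is both a frame and a Riesz sequence; the lower bound $A\sigma_{\min}^2(W)$ is read off from (a).

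The main obstacle I anticipate is purely bookkeeping rather than conceptual: making the coset decomposition of the analysis operator genuinely an \emph{isometry} up to the constants, i.e.\ being careful that the translations $t\mapsto t-\ell$ sending $S'+\ell$ to $S'$ are measure-preserving and that the sets $S'+\ell$, $\ell\in\mathcal L$, are pairwise disjoint (which forces, after unscaling, $S\subset[0,\frac1N)$ and $\mathcal L\subset\Z_N$ — exactly the standing hypotheses), so that $\|f\|^2=\sum_\ell\|f_\ell\|^2$ holds on the nose. A secondary point requiring a line of care is the Riesz-sequence case: one must check that the synthesis operator of $E(\Omega'+\mathcal K)$, written in the factored form, has closed range and is bounded below on $\ell^2$ precisely when $W$ is onto — this is where surjectivity of $W$ (not injectivity) enters, and it is cleanest to deduce it by applying part (a) to the adjoint/Gram side, or by invoking the Riesz-sequence$\Leftrightarrow$frame-for-its-span dichotomy together with $\sigma_{\min}(W^*)=\sigma_{\min}(W)$ when $W$ is onto. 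None of these steps is deep; the content is the algebraic identity $e^{2\pi i\omega\ell}=1$ for $\omega\in\Z$, $\ell\in\Z$, which is what decouples the frequency variable from the shift variable and produces the matrix $W_{\mathcal K,\mathcal L}$.
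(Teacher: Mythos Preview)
Your overall strategy---decompose $f$ over the cosets, factor through the matrix $W_{\mathcal K,\mathcal L}$, then invoke the Riesz-basis bounds for $E(\Omega)$---is exactly the paper's. But your key factorization identity contains a real error. You claim (after rescaling to $N=1$) that $\langle f,e^{2\pi i(\omega+c_k)(\cdot)}\rangle$ equals $\sum_\ell e^{2\pi i c_k\ell}\langle f_\ell,e^{2\pi i\omega(\cdot)}\rangle_{L^2(S')}$. Your own computation $e^{2\pi i(\omega+c_k)(t+\ell)}=e^{2\pi i\omega\ell}e^{2\pi i c_k\ell}e^{2\pi i(\omega+c_k)t}$ shows, however, that what remains in the $t$-integral is $e^{2\pi i(\omega+c_k)t}$, not $e^{2\pi i\omega t}$. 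The correct identity is
\[
\langle f,e^{2\pi i(\omega+c_k)(\cdot)}\rangle
=\sum_{\ell\in\mathcal L} e^{-2\pi i c_k\ell}\,\langle f_\ell,e^{2\pi i(\omega+c_k)(\cdot)}\rangle_{L^2(S')},
\]
and the inner products on the right depend on $k$. Hence your vector $v(\omega)$ cannot be defined independently of $k$, and the displayed equality $\sum_{k,\omega}|\langle f,\cdot\rangle|^2=\sum_\omega\|W_{\mathcal K,\mathcal L}\,v(\omega)\|^2$ is false in general (test: $S'=[0,1)$, $\Omega'=\Z$, $f=\mathds 1_{[0,1)}$, $c_k=\tfrac12$, $\omega=0$; the two sides of your inner-product identity are $-2i/\pi$ and $1$).

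The fix is to push the matrix action to the \emph{function} level rather than the coefficient level, which is precisely what the paper does: set $F_k(t):=\sum_{\ell}(W_{\mathcal K,\mathcal L})_{k,\ell}\,f_\ell(t)$, so that $\langle f,e^{2\pi i\lambda(\cdot)}\rangle=\langle F_k,e^{2\pi i\lambda(\cdot)}\rangle_{L^2(S)}$ for every $\lambda\in\Omega+c_k$. Then apply the Riesz-basis bounds of $E(\Omega{+}c_k)$ on $L^2(S)$ (same constants $A,B$, by Proposition~\ref{prop:RB-basic-operations}) to each $F_k$ separately, obtaining $\sum_{k,\lambda}|\cdot|^2\asymp_A^B\sum_k\|F_k\|_{L^2(S)}^2$, and only \emph{then} use the pointwise identity $\sum_k|F_k(t)|^2=\|W_{\mathcal K,\mathcal L}[f_\ell(t)]_{\ell}\|_2^2$ to bring in $\sigma_{\min}^2(W_{\mathcal K,\mathcal L})$. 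The Riesz-sequence part is handled dually with $(W_{\mathcal K,\mathcal L})^*$ on the synthesis side. Once you swap the order of operations (matrix first, then Riesz bounds), your outline becomes the paper's proof and delivers the stated lower bound $A\,\sigma_{\min}^2(W_{\mathcal K,\mathcal L})$.
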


\begin{remark}
\rm
If $\mathcal{K}$ or $\mathcal{L}$ consists of consecutive integers, then $W_{\mathcal{K},\mathcal{L}}$ is essentially a Vandermonde matrix (due to the multi-linearity of determinant in the rows/columns; see \eqref{eqn:compute-essentially-Vandermonde-determinant} below).
If $N \in \N$ is a prime and $\mathcal{K}, \mathcal{L} \subset \Z_N$, then $W_{\mathcal{K},\mathcal{L}}$ has full rank by Chebotar\"{e}v's theorem on roots of unity, which asserts that every minor of the Fourier matrix in prime dimension is nonzero (see e.g., \cite{SL96}).
\end{remark}

Proposition \ref{prop:characterization-based-on-Fourier-submatrices} is somewhat considered folklore; see for instance, \cite[Theorem 1.1]{D19} and \cite[Lemma 1.13]{OU16-book}.
For self-containedness of the paper, we provide a proof of Proposition \ref{prop:characterization-based-on-Fourier-submatrices} in Section \ref{sec:appendix-proof-prop-characterization-based-on-Fourier-submatrices}. 

Now, let us discuss some examples of exponential Riesz bases.
The first example is based on Proposition \ref{prop:characterization-based-on-Fourier-submatrices}.

\begin{example}\label{ex:RB-N4-periodic}
(a) Let $\mathcal{K}, \mathcal{L} \subset \Z_4$ with $| \mathcal{K} | = | \mathcal{L} |$.
If $\mathcal{K}$ or $\mathcal{L}$ is consecutive,
then $W_{\mathcal{K},\mathcal{L}}$ is essentially a Vandermonde matrix which is invertible, and hence $E(\cup_{k \in \mathcal{K}} \, N\Z {+} k)$ is a Riesz basis for $L^2(\cup_{\ell \in \mathcal{L}} \, [\frac{\ell}{N},\frac{\ell+1}{N}) )$.
On the other hand, for $\mathcal{K} = \mathcal{L} = \{ 0, 2 \}$ the matrix
\[
W_{\mathcal{K},\mathcal{L}} = \big[ e^{- 2\pi i k \ell / 4}
\big]_{k \in \mathcal{K}, \, \ell \in \mathcal{L}}
= \begin{bmatrix}
1 & 1 \\
1 & 1
\end{bmatrix}
\]
is singular, so that
$E(2\Z) = E(4 \Z \cup (4 \Z {+} 2))$ is not a frame nor a Riesz sequence for $L^2([0,\frac{1}{4}) \cup [\frac{2}{4},\frac{3}{4}))$.
In fact, since all functions in $E(2\Z)$ are $\frac{1}{2}$-periodic, the system $E(2\Z)$ is not complete in $L^2([0,\frac{1}{4}) \cup [\frac{2}{4},\frac{3}{4}))$.
However, this is turns out to be a rare case.
If $\mathcal{K}$ is replaced by $\{ 0, c \}$ for any real number $0 < c < 4$, $c \neq 2$, then
\[
W_{\mathcal{K},\mathcal{L}}
= \begin{bmatrix}
1 & 1 \\
1 & e^{- c \pi i}
\end{bmatrix}
\]
is invertible, so that $E(4 \Z \cup (4 \Z {+} c))$ is a Riesz basis for $L^2([0,\frac{1}{4}) \cup [\frac{2}{4},\frac{3}{4}))$.

(b) Let $P \in \N$ be a prime. By Chebotar\"{e}v's theorem on roots of unity (see e.g., \cite{SL96}), the matrix $W_{\mathcal{K},\mathcal{L}}$ is invertible for every $\mathcal{K}, \mathcal{L} \subset \Z_P$ with $| \mathcal{K} | = | \mathcal{L} |$.
Hence, the system $E(\cup_{k \in \mathcal{K}} \, P\Z {+} k)$ is a Riesz basis for $L^2(\cup_{\ell \in \mathcal{L}} \, [\frac{\ell}{P},\frac{\ell+1}{P}) )$ whenever $\mathcal{K}, \mathcal{L} \subset \Z_P$ with $| \mathcal{K} | = | \mathcal{L} |$.
By rescaling (Proposition \ref{prop:RB-basic-operations}), we obtain that
$E(\cup_{k \in \mathcal{K}} \, 4\Z {+} \frac{4}{P}k)$ is a Riesz basis for $L^2(\cup_{\ell \in \mathcal{L}} \, [\frac{\ell}{4},\frac{\ell+1}{4}) )$ whenever $\mathcal{K}, \mathcal{L} \subset \Z_P$ with $| \mathcal{K} | = | \mathcal{L} |$.
\end{example}

The next example is rather pathological and shows the delicate nature of exponential Riesz bases.

\begin{example}\label{ex:TwoRieszBases-1-intersection}
Fix any $0 < \epsilon < \frac{1}{4}$ and set
\[
\begin{split}
\Lambda^{(1)} &:= \{ \ldots , \, -6, \, -4, \, -2, \, 0 , \, 1 {+}\epsilon , \, 3 {+}\epsilon , \, 5 {+}\epsilon , \, \ldots \} , \\
\Lambda^{(2)} &:= - \Lambda^{(1)}
= \{ \ldots , \, -5 {-}\epsilon, \, -3 {-}\epsilon, \, -1 {-}\epsilon, \, 0 , \, 2 , \, 4 , \, 6 , \, \ldots \} .
\end{split}
\]
Then $\Lambda^{(1)} \cap \Lambda^{(2)} = \{ 0 \}$ and
\[
\Lambda^{(1)} \cup \Lambda^{(2)}
= \{ \ldots , \, -3 {-}\epsilon, \, -2, \, -1 {-} \epsilon, \, 0 , \, 1 {+}\epsilon , \, 2 , \, 3 {+}\epsilon , \, \ldots \}  .
\]
By labeling the elements of $\Lambda^{(1)}$ by
\[
\ldots , \;\; \lambda_{-2} = -4, \;\; \lambda_{-1} = -2, \;\; \lambda_0 = 0 , \;\; \lambda_1 = 1 {+}\epsilon , \;\; \lambda_2 = 3 {+} \epsilon , \;\; \ldots ,
\]
and comparing them with elements of $2 \Z {-} \frac{1-\epsilon}{2}$, we see that
\[
\lambda_n - (2n {-} \tfrac{1-\epsilon}{2})
=
\begin{cases}
\frac{1-\epsilon}{2} & \text{for} \;\; n \leq 0 \, ,  \\
-\frac{1-\epsilon}{2} & \text{for} \;\; n \geq 1 \, .
\end{cases}
\]
Then Theorem \ref{thm:Kadec} with $c=2$ and $\delta = \frac{1-\epsilon}{4}$ implies that $E(\Lambda^{(1)})$ is a Riesz basis for $L^2[0,\frac{1}{2})$ and thus, $E(\Lambda^{(2)}) = E(-\Lambda^{(1)})$ is a Riesz basis for $L^2[-\frac{1}{2},0) \cong L^2[\frac{1}{2},1)$.
Also, Theorem \ref{thm:Kadec} with $c=1$ implies that $E(\Lambda^{(1)}\cup \Lambda^{(2)} )$ is a Riesz basis for $L^2[0,1)$.
Consequently, $E(\Lambda^{(1)})$, $E(\Lambda^{(2)})$ and $E(\Lambda^{(1)}\cup \Lambda^{(2)} )$ form Riesz bases for $L^2[0,\frac{1}{2})$, $L^2[\frac{1}{2},1)$ and $L^2[0,1)$ respectively, despite the fact that $\Lambda^{(1)}$ and $\Lambda^{(2)}$ have a nonempty intersection, namely $\Lambda^{(1)} \cap \Lambda^{(2)} = \{ 0 \}$.

Now, let us consider a uniform shifting of $\Lambda^{(2)}$.
For any $0 < \delta < \epsilon \; (< \frac{1}{4})$, we have
$\Lambda^{(1)} \cap (\Lambda^{(2)} {+} \delta) = \emptyset$, $\dist ( \Lambda^{(1)}, \Lambda^{(2)} {+} \delta) = \delta$, and
\[
\Lambda^{(1)} \cup (\Lambda^{(2)} {+} \delta)
= \{ \ldots , \,  -3 {-}\epsilon{+}\delta, \, -2, \, -1 {-} \epsilon{+}\delta, \, 0 , \, \delta , \, 1 {+}\epsilon , \, 2{+}\delta , \, 3 {+}\epsilon , \, \ldots \}  .
\]
Note that removing the element $0$ from $\Lambda^{(1)} \cup (\Lambda^{(2)} {+} \delta)$ yields a pointwise perturbation of $\Z$ by at most $\epsilon \; (< \frac{1}{4})$, so the system $E( (\Lambda^{(1)} \backslash \{ 0 \}) \cup (\Lambda^{(2)} {+} \delta) )$ is a Riesz basis for $L^2[0,1)$ by Theorem \ref{thm:Kadec}.
This implies that $E(\Lambda^{(1)} \cup (\Lambda^{(2)} {+} \delta))$ is overcomplete and thus cannot be a Riesz basis for $L^2[0,1)$, whereas $E(\Lambda^{(1)})$ and $E(\Lambda^{(2)} {+} \delta)$ form Riesz bases for $L^2[0,\frac{1}{2})$ and $L^2[\frac{1}{2},1)$ respectively. 
\end{example}

\section{Comparison with Related Work}
\label{sec:comparison-existing-work}

As mentioned in the introduction, Kozma and Nitzan \cite{KN15} proved that for any finite union $S$ of disjoint intervals in $[0,1)$,
there exists a set $\Lambda \subset \Z$ such that $E(\Lambda)$ is a Riesz basis for $L^2(S)$.
Their result is based on the following lemma which \emph{combines} exponential Riesz bases in a certain way.

For any $N \in \N$, any measurable set $S \subset [0,1)$, and $n = 1, \ldots, N$, let
\[
A_{\geq n}
:= \Big\{ t \in [0, \tfrac{1}{N}) : t+ \tfrac{k}{N} \in S \;\; \text{for at least $n$ values of} \;\; k \in \{ 0 , 1, \ldots, N{-}1 \} \Big\} .
\]

\begin{lemma}[Lemma 2 in \cite{KN15}]
\label{lem:KN15-Lemma2}
Let $N \in \N$ and let $S \subset [0,1)$ be a measurable set.
If there exist sets $\Lambda_1, \ldots, \Lambda_N \subset N\Z$ such that $E(\Lambda_n)$ is a Riesz basis for $L^2(A_{\geq n})$, then $E( \cup_{n=1}^N (\Lambda_n {+} n) )$ is a Riesz basis for $L^2 (S)$.
\end{lemma}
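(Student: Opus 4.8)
The plan is to reduce the statement to an iterated application of Theorem~\ref{thm:RB-union-two-sets-generalization} (or rather the special case $K=1$, which is Theorem~\ref{thm:two-sets-nested-translation-by-a} with $a = 1/N$), peeling off one ``layer'' $A_{\geq n}$ at a time. The key structural observation is the nesting $A_{\geq 1} \supset A_{\geq 2} \supset \cdots \supset A_{\geq N}$, all sitting inside $[0,\tfrac{1}{N})$, and the fact that $S$ decomposes, up to translations by multiples of $\tfrac1N$, into these layers: writing $S_n^* := A_{\geq n} + \tfrac{n}{N}$ we would like $S = \bigcupdot_{n=1}^N S_n^*$ after a suitable relabeling. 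This last point is exactly the combinatorial heart of the Kozma--Nitzan trick: for each fixed $t \in [0,\tfrac1N)$, the number of $k$ with $t + \tfrac kN \in S$ is some $m = m(t)$, and one distributes the $m$ ``present'' translates of $t$ among the layers $A_{\geq 1},\dots,A_{\geq m}$ by a measurable selection, so that translating $A_{\geq n}$ by $\tfrac nN$ recovers a piece of $S$ and the pieces tile $S$ exactly. I would either cite this measurable-selection/tiling fact from \cite{KN15} or reprove it in a short paragraph.

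Granting the decomposition, I would argue by induction on $N$. For $N=1$ there is nothing to prove. For the inductive step, set $T_1 := A_{\geq 1}$ and $T_2 := \bigcupdot_{n=2}^N (A_{\geq n} + \tfrac{n-1}{N})$, so that $T_1, T_2 \subset [0, \tfrac{N-1}{N})$ are disjoint and, by the induction hypothesis applied to the $(N-1)$-layer problem on $[0,\tfrac{N-1}{N})$, the system $E\big(\bigcup_{n=2}^N (\Lambda_n + (n-1))\big)$ is a Riesz basis for $L^2(T_2)$. Now I want to glue $E(\Lambda_1)$ on $L^2(T_1)$ to this. Observe $T_1 + \tfrac1N = A_{\geq 1} + \tfrac1N \subset S \subset T_2 + \tfrac1N$? — this needs care; the clean way is to instead apply Theorem~\ref{thm:two-sets-nested-translation-by-a} with $S_1 = A_{\geq 1}$, $S_2 = \bigcupdot_{n=2}^N(A_{\geq n} + \tfrac nN)$, $a = \tfrac1N$, $\Lambda_1$ for the base layer, and $\Lambda_2 := \bigcup_{n=2}^N (\Lambda_n + n) \subset N\Z$ (indeed $\Lambda_n \subset N\Z$ forces $\Lambda_n + n \subset N\Z + n \subset \tfrac{1}{1/N}\Z = N\Z$, wait — $\tfrac1a\Z = N\Z$, and $N\Z + n \not\subset N\Z$ unless $N\mid n$; this is the point where the hypothesis $\Lambda_n\subset N\Z$ combined with the shift by $n$ lands everything in $\bigcup_{n}(N\Z+n)$, which is why the generalized Theorem~\ref{thm:RB-union-two-sets-generalization} with $c_k = k$ is the right tool, not Theorem~\ref{thm:two-sets-nested-translation-by-a}). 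So in fact I would set up a single application of Theorem~\ref{thm:RB-union-two-sets-generalization} with $N$ in the theorem equal to $N$ here, $K = N-1$, $c_k = k$ for $k=1,\dots,N-1$, $S_1 = A_{\geq N} + \tfrac NN$ reorganized appropriately... but the indices must be arranged so that the ``translated away'' part $S_2$ receives the union $\Lambda_2 = \bigcup (\Lambda_n + n)$ which is a subset of $\bigcup_{k}(N\Z + c_k)$, while $\Lambda_1$ (a single layer, in $N\Z$) stays at distance $>0$ from that union.

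The cleanest induction, which I would actually carry out: peel off the \emph{innermost} layer. Let $S_2 := \bigcupdot_{n=1}^{N-1}(A_{\geq n} + \tfrac nN)$ and $S_1 := A_{\geq N}$ ... no: the domains $A_{\geq n}+\tfrac nN$ for $n\le N-1$ need not contain a translate of $A_{\geq N}+\tfrac NN$. The safe route is to peel the \emph{outermost} layer $n=1$: put $S_1 := A_{\geq 1} = A_{\geq 1} + \tfrac{0}{N}$, realize that $A_{\geq 1} \supseteq A_{\geq n}$ so $S_1 + \tfrac{n-1}{N} \supseteq A_{\geq n} + \tfrac{n-1}{N}$... which is the wrong inclusion direction. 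Because of this sign subtlety, the honest main obstacle is orchestrating which set plays $S_1$ and which plays $S_2$ so that the containment $S_1 + \tfrac kN \subset S_2$ of Theorem~\ref{thm:RB-union-two-sets-generalization} holds: the natural candidate is $S_1 = $ a \emph{higher} layer (smaller set) and $S_2 = $ the union of \emph{lower} layers, translated, since $A_{\geq n+1} + \tfrac{j}{N} \subset A_{\geq n} + \tfrac{j'}{N}$ is plausible for suitable translates. I expect that the correct bookkeeping is: induct on $N$, apply Theorem~\ref{thm:RB-union-two-sets-generalization} once with $S_1 := A_{\geq N}$ translated, $\Lambda_1 := \Lambda_N + N \subset N\Z$, against $S_2$ and $\Lambda_2$ built from layers $1,\dots,N-1$; verify $S_1 + \tfrac kN \subset S_2$ for $k=1,\dots$ using only the inclusions among the $A_{\geq n}$; check $\dist(\Lambda_1,\bigcup(N\Z+c_k))>0$ (automatic, as $\Lambda_1\subset N\Z$ and $c_k\notin N\Z$ are integers not divisible by $N$); and invoke the induction hypothesis for $L^2(S_2)$. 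Getting the translations/inclusions to line up — and confirming the measurable tiling $S = \bigcupdot (A_{\geq n} + \tfrac nN)$ up to relabeling — is the part I expect to require the most care; the Riesz-basis gluing itself is then immediate from the cited theorem.
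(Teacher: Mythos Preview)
The paper does not prove this lemma at all: it is quoted as Lemma~2 of \cite{KN15} and used as a black box (for instance, in the proof of Proposition~\ref{prop:combineRB-multiple-sets}). So there is no ``paper's own proof'' to compare against; the original argument in \cite{KN15} is a direct Fourier/Vandermonde computation on the folded function, not an induction on layers.

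More importantly, your proposed reduction to Theorem~\ref{thm:RB-union-two-sets-generalization} has a genuine gap. The whole plan rests on a decomposition
\[
S \;=\; \bigcupdot_{n=1}^{N}\bigl(A_{\geq n}+\tfrac{\ell_n}{N}\bigr)
\]
for some \emph{fixed} shifts $\ell_1,\dots,\ell_N\in\{0,\dots,N-1\}$, but such a decomposition need not exist. Take $N=3$ and $S=[0,\tfrac16)\cup[\tfrac12,\tfrac56)$. One checks $A_{\geq 1}=[0,\tfrac13)$, $A_{\geq 2}=[0,\tfrac16)$, $A_{\geq 3}=\emptyset$, yet none of the three translates $A_{\geq 1}+\tfrac{\ell}{3}$, $\ell\in\{0,1,2\}$, is contained in $S$; hence no choice of $\ell_1,\ell_2$ gives a tiling. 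The measurable selection you allude to from \cite{KN15} does tile $S$, but by pieces of the form $\{t+\tfrac{\pi_n(t)}{N}:t\in A_{\geq n}\}$ with a \emph{$t$-dependent} shift $\pi_n(t)$; these are not translates of $A_{\geq n}$ by a single constant, so the hypothesis $S_1+\tfrac{k}{N}\subset S_2$ of Theorem~\ref{thm:RB-union-two-sets-generalization} is unavailable. The bookkeeping difficulties you kept running into are symptoms of this structural obstruction, not mere index juggling.

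What your induction \emph{would} prove is exactly the special case in which $S$ already comes as a union of fixed translates of nested sets in $[0,\tfrac1N)$---that is Proposition~\ref{prop:combineRB-multiple-sets}, which the paper obtains \emph{from} Lemma~\ref{lem:KN15-Lemma2}, not the other way around. To get the full lemma you need either the original Kozma--Nitzan argument or a different idea that does not require $S$ to split into rigid translates of the layers.
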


Note that the sets $\Lambda_n$ are shifted by $n$ before taking their union, which prevents overlapping of frequency sets.
When $N$ is prime, one can use \emph{arbitrary} shift factors instead of $n$.

\begin{lemma}[Lemma 6 in \cite{CL22}]
\label{lem:KN15-Lemma2-variant-Nprime}
Let $N \in \N$ be a prime and let $S \subset [0,1)$ be a measurable set.
If there exist sets $\Lambda_1, \ldots, \Lambda_N \subset N\Z$ such that $E(\Lambda_n)$ is a Riesz basis for $L^2(A_{\geq n})$, then for every permutation $\{ k_n \}_{n=1}^N$ of $\{ 1, \ldots, N \}$ the system $E( \cup_{n=1}^N (\Lambda_n {+} k_n) )$ is a Riesz basis for $L^2 (S)$.
\end{lemma}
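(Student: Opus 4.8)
The plan is to show directly that the synthesis operator
\[
T\colon \ell^2(\Lambda)\to L^2(S),\qquad \{c_\mu\}_{\mu\in\Lambda}\mapsto\sum_{\mu\in\Lambda}c_\mu\,e^{2\pi i\mu(\cdot)}\big|_S ,
\]
of the union $\Lambda:=\bigcup_{n=1}^N(\Lambda_n{+}k_n)$ is a bounded bijection (and, in the two weaker cases, that it has dense range, resp.\ that it is injective and bounded below). Boundedness is the easy part: each $\Lambda_n$ is separated, and since $\{k_n\}$ is a permutation of $\{1,\dots,N\}$ the cosets $N\Z{+}k_n$ lie in pairwise distinct residue classes mod $N$, so $\Lambda$ is separated and the union $\bigcup_n(\Lambda_n{+}k_n)$ is in fact \emph{disjoint}; together with $S\subset[0,1)$ this makes $E(\Lambda)$ a Bessel sequence, hence $T$ bounded. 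So everything reduces to inverting $T$ with uniform norm control.

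For this I would use a periodic factorization of $T$. Since $\Lambda_n\subset N\Z$, given $c=(c^{(n)})_{n=1}^N$ with $c^{(n)}\in\ell^2(\Lambda_n)$, the function $\phi_n:=\sum_{\lambda\in\Lambda_n}c^{(n)}_\lambda\,e^{2\pi i\lambda(\cdot)}$ is $\tfrac1N$-periodic, lies in $V_n:=\overline{\spanset}\,E(\Lambda_n)\subset L^2[0,\tfrac1N)$ with $\|\phi_n\|^2_{L^2[0,1/N)}=\tfrac1N\|c^{(n)}\|^2_{\ell^2}$, and $T\{c\}=\big(\sum_{n=1}^N e^{2\pi i k_n(\cdot)}\phi_n\big)\big|_S$. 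Folding $[0,1)$ onto $[0,\tfrac1N)$ and writing $g\in L^2(S)$ as $(g_0,\dots,g_{N-1})$ with $g_k\in L^2(S_k)$, $S_k:=(S{-}\tfrac{k}{N})\cap[0,\tfrac1N)$, the equation $T\{c\}=g$ becomes, with $\omega:=e^{2\pi i/N}$ and $\psi_n:=e^{2\pi i k_n(\cdot)}\phi_n$,
\[
\sum_{n=1}^N \omega^{k_n k}\,\psi_n(s)=g_k(s)\qquad\text{a.e.\ }s\in S_k,\quad k=0,\dots,N{-}1 .
\]
Partitioning $[0,\tfrac1N)$ into the cells $B_{\mathcal L}:=\{s\in[0,\tfrac1N):\ s{+}\tfrac{\ell}{N}\in S\text{ exactly for }\ell\in\mathcal L\}$ over $\mathcal L\subset\Z_N$, one has $A_{\ge n}=\bigsqcup_{|\mathcal L|\ge n}B_{\mathcal L}$, and on a cell with $|\mathcal L|=m$ only the fibers in $\mathcal L$ occur, so there the system is simply $\sum_{n=1}^N\omega^{k_n k}\psi_n(s)=g_k(s)$ for $k\in\mathcal L$, i.e.\ $m$ equations a.e.\ on $B_{\mathcal L}$.

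I would then solve this system cell by cell, in decreasing order of $m=|\mathcal L|$. The hypothesis on $\Lambda_n$ says precisely that the restriction $R_n\colon V_n\to L^2(A_{\ge n})$, $\phi_n\mapsto\phi_n|_{A_{\ge n}}$, is an isomorphism; hence prescribing the ``free part'' $h_n:=\phi_n|_{A_{\ge n}}$ determines $\phi_n$ (and thus $\psi_n$) on all of $[0,\tfrac1N)$, with comparable norms. When the recursion reaches a cell $B_{\mathcal L}$ with $|\mathcal L|=m$, the components of $h_{m+1},\dots,h_N$ on all strictly larger cells have already been fixed, so $\psi_{m+1}|_{B_{\mathcal L}},\dots,\psi_N|_{B_{\mathcal L}}$ are already determined, while $\psi_1|_{B_{\mathcal L}},\dots,\psi_m|_{B_{\mathcal L}}$ remain free; moving the known terms to the right, the $m$ equations on $B_{\mathcal L}$ become
\[
\big[\,\omega^{k_n k}\,\big]_{k\in\mathcal L,\ 1\le n\le m}\,\big(\psi_1(s),\dots,\psi_m(s)\big)^{T}=(\text{known}),
\]
an $m\times m$ system whose matrix is a square submatrix of the $N$-point DFT matrix. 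This is exactly where primality of $N$ enters: by Chebotar\"{e}v's theorem on roots of unity every such submatrix is invertible, so the system is uniquely solvable with operator-norm bound $1/\sigma_{\min}$ of the matrix, and since only finitely many such matrices and finitely many isomorphisms $R_n$ appear, all bounds are uniform. Running the recursion with $g=0$ forces $\psi_n\equiv0$ for every $n$, hence $c=0$, so $T$ is injective; running it for general $g$ produces $c$ with $T\{c\}=g$ and $\|c\|_{\ell^2}\le C\|g\|$, so $T$ is onto with a bounded right inverse. Together with the first step, $T$ is a bicontinuous bijection, i.e.\ $E(\Lambda)$ is a Riesz basis for $L^2(S)$. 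The ``complete sequence'' and ``Riesz sequence'' variants follow the same scheme: if each $R_n$ only has dense range, running the recursion on a dense class of targets gives $\overline{\range T}=L^2(S)$; if each $R_n$ is only injective and bounded below, the recursion with $g=0$ together with the uniform bounds gives injectivity of $T$ and $\|T\{c\}\|\gtrsim\|c\|$. (Proposition \ref{prop:characterization-based-on-Fourier-submatrices} is the degenerate case of this argument with a single cell and a single spectrum.)

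The step I expect to be the main obstacle is the bookkeeping of this recursion: verifying that at each cell exactly the intended unknowns $\psi_1,\dots,\psi_m$ are free while the remaining ones are already pinned down through the isomorphisms $R_n$, that the resulting coefficient matrix is genuinely a square submatrix of the $N$-point DFT matrix (so that Chebotar\"{e}v applies), and that the norm estimates stay uniform across cells (there are at most $2^N$ distinct cell shapes, which keeps the constants finite). Once this structure is in place, invertibility of each block is immediate from Chebotar\"{e}v's theorem, and that is precisely --- and only --- where primality of $N$ is used; it cannot be dispensed with, as the case $\mathcal K=\mathcal L=\{0,2\}$ with $N=4$ in Example \ref{ex:RB-N4-periodic} shows.
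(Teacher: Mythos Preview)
The paper does not prove this lemma; it is quoted from \cite{CL22} and used as a black box in the proof of Proposition~\ref{prop:combineRB-multiple-sets}, so there is no in-paper argument to compare against.

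Your approach is correct and is the natural extension of Kozma--Nitzan's argument for Lemma~\ref{lem:KN15-Lemma2} to arbitrary shift permutations. The bookkeeping you flag as the main obstacle does work out: at stage $m$, each $\phi_n$ with $n>m$ is already fixed as $R_n^{-1}h_n$ because $A_{\ge n}\subset\bigcup_{|\mathcal L'|>m}B_{\mathcal L'}$ has been fully processed, so $\psi_n|_{B_{\mathcal L}}$ is known for $n>m$; for $n\le m$ the cell $B_{\mathcal L}$ lies in $A_{\ge n}$, so the solved values feed into $h_n$, which is assembled only at the later stage $n$---hence no circularity, and consistency on $B_{\mathcal L}$ is automatic since $R_n$ is a genuine inverse of restriction. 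The $m\times m$ coefficient matrix is a square minor of the $N$-point DFT (after identifying $k_n=N$ with $0$ if it occurs among $k_1,\dots,k_m$), so Chebotar\"{e}v applies exactly as you say. Two minor remarks: the ``two weaker cases'' you allude to are not part of this lemma as stated, and the completeness variant in particular would not follow from the recursion as written, since $R_n^{-1}$ is then unavailable; and for injectivity, ``running the recursion with $g=0$'' is more precisely a downward induction showing $\phi_N=0$, then $\phi_{N-1}=0$, and so on, using that each $R_n$ is injective---slightly different in spirit from the surjectivity construction, but straightforward.
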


Using Lemmas \ref{lem:KN15-Lemma2} and \ref{lem:KN15-Lemma2-variant-Nprime}, we obtain the following proposition
which is in the same spirit as our main results, that is, one directly takes unions of frequency sets and domains respectively with appropriate shifts.

\begin{proposition}\label{prop:combineRB-multiple-sets}
Let $S_M  \subset \cdots \subset S_1 \subset [0,\frac{1}{N})$ be measurable sets, where $M,N \in \N$ with $M \leq N$.
Assume that there exist sets $\Lambda_1, \ldots, \Lambda_M \subset N \Z$ such that for each $m = 1,\ldots,M$, the system $E(\Lambda_{m})$ is a Riesz basis for $L^2(S_{m})$. \\
(a) For any permutation $\{ \ell_n \}_{n=1}^N$ of $\{ 1, \ldots, N \}$,
\[
\text{
$E ( \cup_{m=1}^M (\Lambda_{m} {+} m ) )$ is a Riesz basis for $L^2 ( \cup_{m=1}^M (S_{m} {+} \tfrac{\ell_m {-} 1}{N} ) )$.
}
\]
(b) If $N$ is prime, then for any permutations $\{ k_n \}_{n=1}^N$ and $\{ \ell_n \}_{n=1}^N$ of $\{ 1, \ldots, N \}$,
\begin{equation}\label{eqn:combineRB-multiple-sets-PRIME}
\text{
$E ( \cup_{m=1}^M (\Lambda_{m} {+} k_m ) )$ is a Riesz basis for $L^2 ( \cup_{m=1}^M (S_{m} {+} \tfrac{\ell_m {-} 1}{N} ) )$,
}
\end{equation}
and moreover, for any subset $J \subseteq \{ 1,\ldots,M\}$,
\begin{equation}\label{eqn:combineRB-multiple-sets-PRIME-hierarchy}
\text{
$E ( \cup_{m \in J} (\Lambda_{m} {+} k_m ) )$ is a Riesz basis for $L^2 ( \cup_{m \in J} (S_{m} {+} \tfrac{\ell_m {-} 1}{N} ) )$.
}
\end{equation}
\end{proposition}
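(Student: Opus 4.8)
The plan is to deduce Proposition~\ref{prop:combineRB-multiple-sets} from the combining lemmas of Kozma--Nitzan --- Lemma~\ref{lem:KN15-Lemma2} for part~(a), and its prime-dimension refinement Lemma~\ref{lem:KN15-Lemma2-variant-Nprime} for part~(b) --- by manufacturing a single master set $S\subset[0,1)$ whose multiplicity sets $A_{\geq n}$ (defined by the recipe preceding Lemma~\ref{lem:KN15-Lemma2}) coincide exactly with the prescribed nested sets $S_n$.

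For part~(a), I would fix the permutation $\{\ell_n\}_{n=1}^N$ and set $S:=\bigcupdot_{m=1}^M\bigl(S_m+\tfrac{\ell_m-1}{N}\bigr)$. Because each $S_m\subset[0,\tfrac1N)$ and $\ell_1-1,\dots,\ell_M-1$ are distinct elements of $\{0,1,\dots,N-1\}$, the translate $S_m+\tfrac{\ell_m-1}{N}$ lies in $[\tfrac{\ell_m-1}{N},\tfrac{\ell_m}{N})$, and these intervals are pairwise disjoint, so the union really is disjoint and $S\subset[0,1)$. The key computation is then that, for this $S$, one has $A_{\geq n}=S_n$ for $1\leq n\leq M$ and $A_{\geq n}=\emptyset$ for $M<n\leq N$: given $t\in[0,\tfrac1N)$ and $k\in\{0,\dots,N-1\}$, disjointness of the intervals forces $t+\tfrac kN\in S$ to occur only when $k=\ell_m-1$ and $t\in S_m$ for some $m\leq M$, so the number of admissible $k$ equals $|\{m\leq M:t\in S_m\}|$, which by the nesting $S_M\subset\dots\subset S_1$ equals $\max\{m\leq M:t\in S_m\}$. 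Extending the data by $\Lambda_n:=\emptyset$ for $M<n\leq N$ (the empty system being vacuously a Riesz basis for $L^2(\emptyset)=\{0\}$), all hypotheses of Lemma~\ref{lem:KN15-Lemma2} are met, and it yields that $E\bigl(\bigcup_{n=1}^N(\Lambda_n+n)\bigr)=E\bigl(\bigcup_{m=1}^M(\Lambda_m+m)\bigr)$ is a Riesz basis for $L^2(S)=L^2\bigl(\bigcup_{m=1}^M(S_m+\tfrac{\ell_m-1}{N})\bigr)$, which is precisely the assertion.

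For part~(b), the first statement~\eqref{eqn:combineRB-multiple-sets-PRIME} is obtained by the identical construction, with Lemma~\ref{lem:KN15-Lemma2} replaced by Lemma~\ref{lem:KN15-Lemma2-variant-Nprime}: since $N$ is prime, that lemma permits the arbitrary shift permutation $\{k_n\}_{n=1}^N$ (again with $\Lambda_n:=\emptyset$ for $M<n\leq N$) in place of the canonical shifts $n$, while the master set $S$ and the identity $A_{\geq n}=S_n$ are unchanged. For the hierarchy statement~\eqref{eqn:combineRB-multiple-sets-PRIME-hierarchy} I would simply observe that it is \eqref{eqn:combineRB-multiple-sets-PRIME} applied to a subcollection: writing $J=\{m_1<\dots<m_r\}$, the sets $S_{m_r}\subset\dots\subset S_{m_1}\subset[0,\tfrac1N)$ and $\Lambda_{m_1},\dots,\Lambda_{m_r}\subset N\Z$ satisfy the hypotheses of the proposition with $r\leq M\leq N$ in the role of $M$, and feeding in permutations of $\{1,\dots,N\}$ that restrict to $j\mapsto k_{m_j}$ and $j\mapsto\ell_{m_j}$ on $\{1,\dots,r\}$ (which exist since the $k_{m_j}$, resp.\ the $\ell_{m_j}$, are distinct), \eqref{eqn:combineRB-multiple-sets-PRIME} delivers exactly~\eqref{eqn:combineRB-multiple-sets-PRIME-hierarchy}.

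I do not expect a genuine obstacle. The only step with real content is the identity $A_{\geq n}(S)=S_n$, which rests entirely on the disjointness of the translated intervals $[\tfrac{\ell_m-1}{N},\tfrac{\ell_m}{N})$ and on the nestedness of the $S_m$. The one thing requiring care is the bookkeeping when $M<N$ --- matching the extra empty frequency sets $\Lambda_n$, $M<n\leq N$, with the vanishing multiplicity sets $A_{\geq n}$ --- which is why I would state the convention about the empty system at the outset.
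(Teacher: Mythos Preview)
Your proposal is correct and follows essentially the same approach as the paper: both construct the master set $S=\bigcup_{m=1}^M(S_m+\tfrac{\ell_m-1}{N})$, verify that $A_{\geq n}=S_n$ for $n\leq M$ and $A_{\geq n}=\emptyset$ otherwise, invoke Lemma~\ref{lem:KN15-Lemma2} (resp.\ Lemma~\ref{lem:KN15-Lemma2-variant-Nprime}) for part~(a) (resp.\ part~(b)), and deduce the hierarchy statement~\eqref{eqn:combineRB-multiple-sets-PRIME-hierarchy} by reapplying~\eqref{eqn:combineRB-multiple-sets-PRIME} to the nested subcollection indexed by $J$. Your write-up is somewhat more explicit about the disjointness of the translates, the counting argument for $A_{\geq n}$, and the convention $\Lambda_n=\emptyset$ for $n>M$, but the underlying argument is identical.
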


\medskip

\noindent
\textbf{Comparison with Proposition \ref{prop:characterization-based-on-Fourier-submatrices}}. \
When $a = \frac{1}{N}$, $S = S_1 = \ldots = S_M \subset [0,\frac{1}{N})$ and $\Omega = \Lambda_1 = \ldots = \Lambda_M \subset N\Z$, Proposition \ref{prop:combineRB-multiple-sets} can be deduced from Proposition \ref{prop:characterization-based-on-Fourier-submatrices}.
Indeed, the corresponding matrix $W_{\mathcal{K},\mathcal{L}}$ in part (a) is a Vandermonde matrix, while that in part (b) is a Fourier submatrix which is invertible by Chebotar\"{e}v's theorem on roots of unity.

\medskip


\noindent
\textbf{Comparison with Theorem \ref{thm:two-sets-nested-translation-by-a}}. \
Let $N \geq 2$.
If $S_1  \subset S_2 \subset [0,\frac{1}{N})$ and $\Lambda_1, \Lambda_2 \subset N\Z$ are such that $E(\Lambda_1)$ and $E(\Lambda_2)$ form Riesz bases for $L^2(S_1)$ and $L^2(S_2)$ respectively, then Proposition \ref{prop:combineRB-multiple-sets}(a) implies that
$E ( (\Lambda_1 {+} 1) \cup \Lambda_2 )$ is a Riesz basis for $L^2 ( S_1 \cup (S_2 {+} \frac{K}{N} ) )$ with any $K \in \{ 1, \ldots, N{-}1 \}$.
This result can be also deduced from Theorem \ref{thm:two-sets-nested-translation-by-a}.
Indeed, by setting $S_1' := S_1$, $S_2' := S_2 {+} \frac{K}{N}$, $\Lambda_1' := \Lambda_1 {+} 1$ and $\Lambda_2' := \Lambda_2$, we have $S_1' {+} \frac{K}{N} \subset S_2'$, $\Lambda_1' \subset N\Z{+}1 \, (\subset \R \backslash \frac{N}{K}\Z)$ and $\Lambda_2' \subset N\Z \, (\subset \frac{N}{K}\Z)$, hence, Theorem \ref{thm:two-sets-nested-translation-by-a} with $a = \frac{K}{N}$ yields the same result.
Note that Theorem \ref{thm:two-sets-nested-translation-by-a} actually allows more flexibility in the choice of $\Lambda_1' \subset \R \backslash \frac{N}{K}\Z$.

\begin{proof}[Proof of Proposition \ref{prop:combineRB-multiple-sets}]
(a) For any permutation $\{ \ell_n \}_{n=1}^N$ of $\{ 1, \ldots, N \}$, the set
$S := \bigcup_{m=1}^M ( S_{m} {+} \frac{\ell_m - 1}{N})$
is contained in $[0,1)$, and
\[
A_{\geq n}
= A_{\geq n} (N,S)
=
\begin{cases}
S_n & \text{for} \;\; n = 1, \ldots, M , \\
\emptyset & \text{for} \;\; n = M{+}1, \ldots, N .
\end{cases}
\]
By Lemma \ref{lem:KN15-Lemma2}, the system $E( \cup_{m=1}^M (\Lambda_{m} {+} m) )$ is a Riesz basis for $L^2 (S)$. \\
(b) If $N \in \N$ is prime, then Lemma \ref{lem:KN15-Lemma2-variant-Nprime} yields that for every permutation $\{ k_n \}_{n=1}^N$ of $\{ 1, \ldots, N \}$ the system $E( \cup_{m=1}^M (\Lambda_{m} {+} k_m ))$ is a Riesz basis for $L^2 (S)$.
Now, fix any subset $J \subseteq \{ 1,\ldots,M\}$, say, $J = \{ j_1, \ldots, j_R \}$ with $j_1 < \cdots < j_R$ and $1 \leq R \leq M$.
Since $S_{j_R}  \subset \cdots \subset S_{j_1} \subset [0,\frac{1}{N})$, we can deduce from \eqref{eqn:combineRB-multiple-sets-PRIME} that
$E( \cup_{r=1}^R (\Lambda_{j_r} {+} k_{j_r} ) )$ is a Riesz basis for $L^2 (\cup_{r=1}^R (S_{j_r} {+}  \frac{\ell_{j_r} {-} 1}{N} ))$, which is exactly \eqref{eqn:combineRB-multiple-sets-PRIME-hierarchy}.
\end{proof}

\section{Proof of Theorem \ref{thm:two-sets-nested-translation-by-a}}
\label{sec:Proof-of-thm:two-sets-nested-translation-by-a}

In the proof of Theorems \ref{thm:two-sets-nested-translation-by-a}, \ref{thm:RB-union-two-sets-generalization}, \ref{thm:RB-union-two-sets-generalization-converse} and \ref{thm:two-sets-nested-translation-by-a-dimension-d}, we will need the following notions.
The \emph{Fourier transform} is defined as 
\[
\mathcal{F} [f] (\omega) = \widehat{f} (\omega) = \int_{-\infty}^{\infty} f(t) \, e^{- 2 \pi i t \omega} \, dt , 
\quad f \in L^1(\R) \cap L^2(\R) , 
\] 
so that $\mathcal{F}[\cdot]$ extends to a unitary operator from
$L^{2}(\mathbb{R})$ onto $L^{2}(\mathbb{R})$.
The \emph{Paley-Wiener space} over a measurable set $S \subset \R$ is defined as
\[
PW(S) := \{ f \in L^2(\R) : \supp \widehat{f} \subset S \}
\;=\; \mathcal{F}^{-1} \big[ L^2(S) \big]
\]
equipped with the norm $\| f \|_{PW(S)} := \| f \|_{L^2(\R)} = \| \widehat{f} \|_{L^2(S)}$. 
If $S \subset \R$ has finite measure, then every $f \in PW(S)$ is continuous and
\begin{equation}\label{eqn:relation-between-PW-and-L2}
f(x) = \int_{S} \widehat{f} (\omega) \, e^{2 \pi i x \omega} \, d\omega
= \big\langle \widehat{f} , e^{- 2 \pi i x (\cdot)} \big\rangle_{L^2(S)}
\quad \text{for all} \;\; x \in \R .
\end{equation}

\begin{definition}
\label{def:set-uniqueness-interpolation}
\rm
Let $S \subset \R$ be a measurable set.
A discrete set $\Lambda \subset \R$ is called
\begin{itemize}
\item
\emph{a set of uniqueness} for $PW(S)$ if the only function $f \in  PW(S)$ satisfying $f(\lambda) = 0$ for all $\lambda \in \Lambda$ is the trivial function $f=0$;


\item
\emph{a set of interpolation} for $PW(S)$ if for each $\{ c_ \lambda \}_{\lambda \in \Lambda} \in \ell_2(\Lambda)$ there exists a function $f \in  PW(S)$ satisfying $f(\lambda) = c_\lambda$ for all $\lambda \in \Lambda$.
\end{itemize}
\end{definition}

The relation \eqref{eqn:relation-between-PW-and-L2} easily implies the following.
\begin{itemize}
\item[({\romannumeral 1})]
$\Lambda$ is a set of uniqueness for $PW(S)$ if and only if $E(-\Lambda) = \overline{E(\Lambda)}$ is complete in $L^2(S)$ if and only if $E(\Lambda)$ is complete in $L^2(S)$.


\item[({\romannumeral 2})]
If $E(\Lambda)$ is a Bessel sequence in $L^2(S)$, then $\Lambda$ is a set of interpolation for $PW(S)$ if and only if $E(\Lambda)$ is a Riesz sequence in $L^2(S)$ (see \cite[Appendix A]{Le21} for more details).
\end{itemize}

We are now ready to prove Theorem \ref{thm:two-sets-nested-translation-by-a}.

\medskip

\noindent
\textbf{Completeness}. \
Assume that $f \in PW (S_1 \cup S_2)$ satisfies $f(\lambda) = 0$
for $\lambda \in \Lambda_1 \cup \Lambda_2$.
Define $f_1\in PW(S_1)$ by $\widehat{f}_1=\widehat{f} \cdot \mathds{1}_{S_1}$ and define $g\in PW(S_2)$ by
\[
 \widehat g(\omega)
 :=
 \underbrace{\widehat f(\omega)-\widehat{f}_1(\omega)}_{\text{supported in} \; S_2 }
 + \underbrace{\widehat{f}_1(\omega - a)}_{\text{supported in} \; S_1 {+} a \, \subset S_2 } .
\]
Since $S_2$ is a bounded set, it follows from \eqref{eqn:relation-between-PW-and-L2} that $g(x)=  f(x) - (1- e^{2\pi i a x}) \, f_1(x)$ for all $x \in \R$; in particular, we have $g(\lambda_2)= f(\lambda_2)=0$ for all $\lambda_2 \in \Lambda_2 \; (\subset \frac{1}{a}\Z)$. Since $g\in PW(S_2)$ and since $\Lambda_2$ is a set of uniqueness for $PW(S_2)$, we have $g=0$ and thus $f(x) = (1-e^{2\pi i a x}) \, f_1(x)$ for all $x \in \R$.
By the assumption, it holds $0 = f(\lambda_1)= (1-e^{2\pi i a \lambda_1}) \, f_1(\lambda_1)$ for all $\lambda_1 \in \Lambda_1$.
Note that for every $\lambda_1 \in \Lambda_1 \; (\subset \R \backslash \frac{1}{a}\Z)$, we have $e^{2\pi i a \lambda_1} \neq 1$ and therefore $f_1(\lambda_1) = 0$.
Since $f_1 \in PW(S_1)$ and since $\Lambda_1$ is a set of uniqueness for $PW(S_1)$, we get $f_1 = 0$ and thus arrive at $f=0$.
This shows that $\Lambda_1 \cup \Lambda_2$ is a set of uniqueness for $PW (S_1 \cup S_2)$,
hence, the system $E(\Lambda_1 \cup \Lambda_2)$ is complete in $L^2 (S_1 \cup S_2)$.

\medskip

\noindent
\textbf{Riesz sequence}. \
Since $\Lambda_1 \cup \Lambda_2$ is separated and since $S_1 \cup S_2$ is bounded, the system $E (\Lambda_1 \cup \Lambda_2)$ is a Bessel sequence in $L^2 (S_1 \cup S_2)$. Thus, to prove that $E (\Lambda_1 \cup \Lambda_2)$ is a Riesz sequence in $L^2 (S_1 \cup S_2)$, it is enough to show that $\Lambda_1 \cup \Lambda_2$ is a set of interpolation for $PW (S_1 \cup S_2)$.

To show this, we will fix an arbitrary $\boldb =\{ b_{\lambda} \}_{\lambda \in \Lambda_1 \cup \Lambda_2} \in \ell_2 (\Lambda_1 \cup \Lambda_2)$ and construct a function $f \in PW (S_1 \cup S_2)$ satisfying $f(\lambda) = b_{\lambda}$ for all $\lambda \in \Lambda_1 \cup \Lambda_2$.
Since $\Lambda_2$ is a set of interpolation for $PW(S_2)$, there exists a function $g \in PW(S_2)$ satisfying $g(\lambda_2)=b_{\lambda_2}$ for all $\lambda_2 \in \Lambda_2$.
Note that for any $f_1 \in PW(S_1)$, the function
\[
f(x) :=  g(x) + (1-e^{2\pi i a x}) \, f_1(x) ,
\quad x \in \R ,
\]
belongs in $PW (S_1 \cup S_2)$ and
satisfies $f(\lambda_2)= g(\lambda_2) = b_{\lambda_2}$ for $\lambda_2 \in \Lambda_2 \; (\subset \frac{1}{a}\Z)$.
We will now choose a particular function $f_1 \in PW(S_1)$ as follows.
Since $\Lambda_1$ is separated and since $S_2$ is bounded, the system $E(\Lambda_1)$ is a Bessel sequence in $L^2(S_2)$, so that the sequence $\{ g(\lambda_1) \}_{\lambda_1 \in \Lambda_1}$ is in $\ell_2(\Lambda_1)$ and so is the sequence $\big\{ \frac{1}{ 1-e^{2\pi i a \lambda_1} } \big( b_{\lambda_1}-g(\lambda_1) \big) \big\}_{\lambda_1 \in \Lambda_1}$.
Here, it holds $e^{2\pi i a \lambda_1} \neq 1$ for all $\lambda_1 \in \Lambda_1 \; (\subset \R \backslash \frac{1}{a}\Z)$, and moreover the condition $\dist (\Lambda_1, \frac{1}{a}\Z) > 0$ ensures that
$\inf_{\lambda_1 \in \Lambda_1} | 1-e^{2\pi i a \lambda_1} | > 0$.
Now, since $\Lambda_1$ is a set of interpolation for $PW(S_1)$, there exists a function $f_1 \in PW(S_1)$ satisfying $f_1(\lambda_1)= \frac{1}{ 1-e^{2\pi i a \lambda_1} } \big( b_{\lambda_1}-g(\lambda_1) \big)$ for all $\lambda_1 \in \Lambda_1$.
We then have $f(\lambda_1)=b_{\lambda_1}$ for all $\lambda_1 \in \Lambda_1$. This shows that the function $f \in PW (S_1 \cup S_2)$ satisfies $f(\lambda) = b_{\lambda}$ for all $\lambda \in \Lambda_1 \cup \Lambda_2$, as desired.

\medskip

\noindent
\textbf{Riesz basis}. \
By definition, a Riesz basis is a complete Riesz sequence. Hence, this part follows immediately by combining the \emph{completeness} and \emph{Riesz sequence} parts.
\hfill $\Box$ 

\begin{remark}
\rm
Fix any nonzero real number $a \neq 0$.
Then for $x \in \R$, the term $1-e^{2\pi i a x}$ vanishes if and only if $a x \in \Z$.
Similarly, for any $M \in \N$ and $x \in \R$, the term
\begin{equation}\label{eqn:one-minus-exp-factorization}
\begin{split}
1-e^{2\pi i M a x}
&= (1-e^{2\pi i a x}) \cdot ( 1+ e^{2\pi i a x} + \ldots + e^{2\pi i (M-1) a x} )  \\
\end{split}
\end{equation}
vanishes if and only if $e^{2\pi i a x}$ is an $M$-th root of unity (i.e., $e^{2\pi i a x}$ is one of $1, \, e^{2\pi i /M}, \, e^{2\pi i 2/M}, \, \ldots, \, e^{2\pi i (M-1)/M}$)
if and only if $a x \in \frac{1}{M} \Z$.
We will make use of the factorization \eqref{eqn:one-minus-exp-factorization} in the proof of Theorem \ref{thm:RB-union-two-sets-generalization}.
\end{remark}


\section{Proof of Theorem \ref{thm:RB-union-two-sets-generalization}}
\label{sec:Proof-of-thm:RB-union-two-sets-generalization}

By shifting the sets $\Lambda_1, \Lambda_2 \subset \R$ simultaneously by a constant (see Proposition \ref{prop:RB-basic-operations}), we may assume without loss of generality that $c_1 = 0$.

\medskip

\noindent
\textbf{Completeness}. \
Assume that $f \in PW (S_1 \cup S_2)$ satisfies $f(\lambda) = 0$
for $\lambda \in \Lambda_1 \cup \Lambda_2$.
Define $f_1\in PW(S_1)$ by $\widehat{f}_1=\widehat{f} \cdot \mathds{1}_{S_1}$ and define $g\in PW(S_2)$ by
\[
 \widehat g(\omega)
 :=
 \underbrace{\widehat f(\omega)-\widehat{f}_1(\omega)}_{\text{supported in} \; S_2 }
 + \underbrace{a_1 \, \widehat{f}_1(\omega - a)}_{\text{supported in} \; S_1 {+} a \, \subset S_2 }
 + \;\cdots\;
 + \underbrace{a_K \, \widehat{f}_1(\omega - Ka)}_{\text{supported in} \; S_1 {+} Ka \, \subset S_2 }
\]
for some constants $a_1, \ldots, a_K \in \R$ to be chosen later.
Requiring $a_1 + \ldots + a_K = 1$, one may rewrite the equation as
\[
 \widehat g(\omega)
 =
 \widehat f(\omega)
 - a_1 \, \big(\widehat{f}_1(\omega) - \widehat{f}_1(\omega - a) \big)
 - \;\cdots\;
 - a_K \, \big(\widehat{f}_1(\omega) - \widehat{f}_1(\omega - Ka) \big),
\]
which is equivalent to
\[
g(x)=  f(x) - a_1 \, (1- e^{2\pi i a x}) \, f_1(x)   - \;\cdots\;  - a_K \, (1- e^{2\pi i K a x}) \, f_1(x) .
\]
Using the factorization \eqref{eqn:one-minus-exp-factorization}, this is in turn equivalent to
\begin{equation}\label{eqn:gx-using-hx}
g(x) =  f(x) \, - \, f_1(x) \cdot (1- e^{2\pi i a x}) \cdot h(x)
\end{equation}
where $h(x) := a_1   + a_2 \, (1+ e^{2\pi i a x}) + \cdots  + a_K \, (1+ \cdots + e^{2\pi i (K-1) a x})$.
Changing the variables from $a_1, \ldots, a_K$ to $a_k' := \sum_{n=k}^K a_n$ for $k =1,\ldots,K$ (which corresponds to a linear bijection between $(a_1, \ldots, a_K) \in \R^K$ and $(a_1', \ldots, a_K') \in \R^K$),
the constraint $a_1 + \ldots + a_K = 1$ becomes $a_1' = 1$. Thus, we have
\begin{equation}\label{eqn:hx-after-change-of-variables}
h(x) = 1  + a_2' \, e^{2\pi i a x} + \cdots  + a_K' \, e^{2\pi i (K-1) a x} .
\end{equation}
We will now find $a_2', \ldots, a_K' \in \R$ satisfying $h (c_k) = 0$ for $k=2, \ldots, K$,
which corresponds to solving the $(K{-}1){\times} (K{-}1)$ linear system
\[
\begin{bmatrix}
e^{2\pi i a c_2} & e^{2\pi i 2 a c_2} & \cdots & e^{2\pi i (K-1) a c_2} \\
e^{2\pi i a c_3} & e^{2\pi i 2 a c_3} & \cdots & e^{2\pi i (K-1) a c_3} \\
\vdots & \vdots & \ddots & \vdots \\
e^{2\pi i a c_K} & e^{2\pi i 2 a c_K} & \cdots & e^{2\pi i (K-1) a c_K}
\end{bmatrix}
\begin{bmatrix}
a_2' \\
a_3' \\
\vdots \\
a_K'
\end{bmatrix}
=
\begin{bmatrix}
-1 \\
-1 \\
\vdots \\
-1
\end{bmatrix} .
\]
Note that the associated matrix has a \emph{nonzero} determinant given by
\begin{equation}\label{eqn:compute-essentially-Vandermonde-determinant}
\begin{split}
& e^{2\pi i a (c_2+c_3+\ldots+c_K)}
\cdot
\det
\begin{bmatrix}
1 & e^{2\pi i a c_2} & \cdots & e^{2\pi i (K-2) a c_2} \\
1 & e^{2\pi i a c_3} & \cdots & e^{2\pi i (K-2) a c_3} \\
\vdots & \vdots & \ddots & \vdots \\
1 & e^{2\pi i a c_K} & \cdots & e^{2\pi i (K-2) a c_K}
\end{bmatrix}  \\
&= e^{2\pi i a (c_2+c_3+\ldots+c_K) }
\cdot
\prod_{2 \leq j < k \leq K} ( e^{2\pi i a c_k} - e^{2\pi i a c_j} )
\neq 0
\end{split}
\end{equation}
where we used the multi-linearity of determinant in the rows, and the Vandermonde determinant formula (see e.g., \cite{HJ13}).
Hence, the linear system is \emph{uniquely solvable} in $a_2', \ldots, a_K' \in \R$, and in turn, $a_1, \ldots, a_K \in \R$ are \emph{uniquely determined} from $a_2', \ldots, a_K'$ via the 1:1 correspondence.
In fact, we must have
\[
h(x) = \prod_{k=2}^K \, (1- e^{2\pi i a (x-c_k)})
\]
which is clearly of the form \eqref{eqn:hx-after-change-of-variables} and satisfies $h (c_k) = 0$ for $k=2, \ldots, K$.
Since $c_1 = 0$, it follows from \eqref{eqn:gx-using-hx} that
\begin{equation}\label{eqn:gx-solution}
g(x)
=  f(x) \, - \, f_1(x) \cdot  \prod_{k=1}^K (1- e^{2\pi i a (x-c_k)})  .
\end{equation}

Now, observe that $g(\lambda_2)= f(\lambda_2) = 0$ for all $\lambda_2 \in \Lambda_2 \subset \cup_{k=1}^K (\frac{1}{a}\Z{+}c_k)$.
Since $g\in PW(S_2)$ and since $\Lambda_2$ is a set of uniqueness for $PW(S_2)$, we have $g=0$ and thus $f(x) = f_1(x) \cdot \prod_{k=1}^K (1- e^{2\pi i a (x-c_k)})$ for all $x \in \R$.
By the assumption, it holds $0 = f(\lambda)= f_1(\lambda) \cdot \prod_{k=1}^K (1- e^{2\pi i a (\lambda-c_k)})$ for all $\lambda \in \Lambda_1$.
Note that for every $\lambda \in \Lambda_1 \subset \R \backslash \cup_{k=1}^K (\frac{1}{a}\Z{+}c_k)$, we have $\prod_{k=1}^K (1- e^{2\pi i a (\lambda-c_k)}) \neq 0$ and therefore $f_1(\lambda) = 0$.
Since $f_1 \in PW(S_1)$ and since $\Lambda_1$ is a set of uniqueness for $PW(S_1)$, we get $f_1 = 0$ and arrive at $f=0$.
Hence, $\Lambda_1 \cup \Lambda_2$ is a set of uniqueness for $PW (S_1 \cup S_2)$, equivalently, $E(\Lambda_1 \cup \Lambda_2)$ is complete in $L^2 (S_1 \cup S_2)$.

\medskip

\noindent
\textbf{Riesz sequence}. \
Since $\Lambda_1 \cup \Lambda_2$ is separated and since $S_1 \cup S_2$ is bounded, the system $E (\Lambda_1 \cup \Lambda_2)$ is a Bessel sequence in $L^2 (S_1 \cup S_2)$. Thus, to prove that $E (\Lambda_1 \cup \Lambda_2)$ is a Riesz sequence in $L^2 (S_1 \cup S_2)$, it is enough to show that $\Lambda_1 \cup \Lambda_2$ is a set of interpolation for $PW (S_1 \cup S_2)$.

To show this, we will fix an arbitrary $\boldb =\{ b_{\lambda} \}_{\lambda \in \Lambda_1 \cup \Lambda_2} \in \ell_2 (\Lambda_1 \cup \Lambda_2)$ and construct a function $f \in PW (S_1 \cup S_2)$ satisfying $f(\lambda) = b_{\lambda}$ for all $\lambda \in \Lambda_1 \cup \Lambda_2$.
Since $\Lambda_2$ is a set of interpolation for $PW(S_2)$, there exists a function $g \in PW(S_2)$ satisfying $g(\lambda_2)=b_{\lambda_2}$ for all $\lambda_2 \in \Lambda_2$.
Note that for any $f_1 \in PW(S_1)$, the function
\[
f(x) =  g(x) + f_1(x) \cdot  \prod_{k=1}^K (1- e^{2\pi i a (x-c_k)})  ,
\quad x \in \R ,
\]
belongs in $PW (S_1 \cup S_2)$ (this is seen by following the derivations \eqref{eqn:gx-using-hx}-\eqref{eqn:gx-solution} backwards) and
satisfies $f(\lambda_2)= g(\lambda_2) = b_{\lambda_2}$ for $\lambda_2 \in \Lambda_2 \subset \cup_{k=1}^K (\frac{1}{a}\Z{+}c_k)$.
We will now choose a particular function $f_1 \in PW(S_1)$ as follows.
Since $\Lambda_1$ is separated and since $S_2$ is bounded, the system $E(\Lambda_1)$ is a Bessel sequence in $L^2(S_2)$, so that the sequence $\{ g(\lambda_1) \}_{\lambda_1 \in \Lambda_1}$ is in $\ell_2(\Lambda_1)$ and so is the sequence
$\big\{ \frac{1}{ \prod_{k=1}^K (1- e^{2\pi i a (\lambda_1-c_k)}) } \big( b_{\lambda_1}-g(\lambda_1) \big) \big\}_{\lambda_1 \in \Lambda_1}$.
Note that $\prod_{k=1}^K (1- e^{2\pi i a (\lambda_1-c_k)}) \neq 0$ for all $\lambda_1 \in \Lambda_1 \subset \R \backslash \cup_{k=1}^K (\frac{1}{a}\Z{+}c_k)$, and moreover, $\dist (\Lambda_1, \cup_{k=1}^K (\frac{1}{a}\Z{+}c_k)) > 0$ implies that
\[
\inf_{\lambda_1 \in \Lambda_1}  \big| \textstyle \prod_{k=1}^K (1- e^{2\pi i a (\lambda_1-c_k)}) \big| > 0 .
\]
Now, since $\Lambda_1$ is a set of interpolation for $PW(S_1)$, there exists a function $f_1 \in PW(S_1)$ satisfying $f_1(\lambda_1)= \frac{1}{ \prod_{k=1}^K (1- e^{2\pi i a (\lambda_1-c_k)}) } \big( b_{\lambda_1}-g(\lambda_1) \big)$ for all $\lambda_1 \in \Lambda_1$.
We then have $f(\lambda_1)=b_{\lambda_1}$ for all $\lambda_1 \in \Lambda_1$. Hence, the function $f \in PW (S_1 \cup S_2)$ satisfies $f(\lambda) = b_{\lambda}$ for all $\lambda \in \Lambda_1 \cup \Lambda_2$, as desired.

\medskip

\noindent
\textbf{Riesz basis}. \
This part follows immediately by combining the \emph{completeness} and \emph{Riesz sequence} parts.
\hfill $\Box$ 

\section{Proof of Theorem \ref{thm:RB-union-two-sets-generalization-converse}}
\label{sec:Proof-of-prop:RB-union-two-sets-generalization-converse}

\noindent
(a) To prove that $E(\Lambda_1)$ is complete in $L^2(S_1)$, we will assume that $f \in PW (S_1)$ satisfies $f(\lambda) = 0$ for $\lambda \in \Lambda_1$, and show that $f = 0$.
Define $g\in PW( S_1 \cup S_2 )$ by
\[
 \widehat g(\omega)
 :=
 \underbrace{\widehat{f} (\omega)}_{\text{supported in} \; S_1 }
 - \underbrace{\widehat{f}(\omega - a)}_{\text{supported in} \; S_1 {+} a \, \subset S_2 } .
\]
Then $g(x)=  (1- e^{2\pi i a x}) \, f(x)$ for $x \in \R$, so that $g(\lambda) = 0$ for all $\lambda \in \Lambda_1 \cup \Lambda_2$.
Since $g\in PW( S_1 \cup S_2 )$ and since $\Lambda_1 \cup \Lambda_2$ is a set of uniqueness for $PW(S_1 \cup S_2)$, we get $g=0$, so that $f(x)=0$ for all $x \in \R \backslash \frac{1}{a}\Z$.
Since $f \in PW (S_1)$ is analytic, we obtain $f=0$ by the uniqueness of analytic continuation.
Hence, $\Lambda_1$ is a set of uniqueness for $PW (S_1)$, equivalently, $E(\Lambda_1)$ is complete in $L^2 (S_1)$. \\
(b) As in part (a), we will assume that $f \in PW (S_1)$ satisfies $f(\lambda) = 0$ for $\lambda \in \Lambda_1$, and show that $f = 0$.
Define $g\in PW( S_1 \cup S_2 )$ by
\[
 \widehat g(\omega)
 :=
 \underbrace{\widehat{f}_1(\omega)}_{\text{supported in} \; S_1 }
 - \underbrace{a_1 \, \widehat{f}_1(\omega - a)}_{\text{supported in} \; S_1 {+} a \, \subset S_2 }
 - \;\cdots\;
 - \underbrace{a_K \, \widehat{f}_1(\omega - Ka)}_{\text{supported in} \; S_1 {+} Ka \, \subset S_2 }
\]
for some $a_1, \ldots, a_K \in \R$ satisfying $a_1 + \ldots + a_K = 1$,
so that
\[
g(x)=  a_1 \, (1- e^{2\pi i a x}) \, f(x)   + \;\cdots\;  + a_K \, (1- e^{2\pi i K a x}) \, f(x) .
\]
By similar arguments as in the proof of Theorem \ref{thm:RB-union-two-sets-generalization}, the constants $a_1, \ldots, a_K \in \R$ are \emph{uniquely determined} by requiring $g (c_k) = 0$ for $k=2, \ldots, K$. In fact, due to the \emph{uniqueness} we obtain
\[
g(x) =  f(x) \cdot  \prod_{k=1}^K (1- e^{2\pi i a (x-c_k)})
\]
Now, observe that $g(\lambda) = 0$ for all $\lambda \in \Lambda_1 \cup \Lambda_2$.
Since $g\in PW( S_1 \cup S_2 )$ and since $\Lambda_1 \cup \Lambda_2$ is a set of uniqueness for $PW(S_1 \cup S_2)$, we get $g=0$, so that $f(x)=0$ for all $x \in \R \backslash \cup_{k=1}^K (\frac{1}{a}\Z{+}c_k)$.
Since $f \in PW (S_1)$ is analytic, we obtain $f=0$ by the uniqueness of analytic continuation.
Hence, $\Lambda_1$ is a set of uniqueness for $PW (S_1)$, equivalently, $E(\Lambda_1)$ is complete in $L^2 (S_1)$.
\hfill $\Box$ 

\section{Proof of Theorem \ref{thm:two-sets-nested-translation-by-a-dimension-d}}
\label{sec:Proof-of-thm:two-sets-nested-translation-by-a-dimension-d}


We will use similar arguments as in the proof of Theorem \ref{thm:two-sets-nested-translation-by-a}. 
The $d$-dimensional \emph{Fourier transform} is defined as 
\[
\mathcal{F} [f] (\boldsymbol{\omega}) = \widehat{f} (\boldsymbol{\omega}) = \int_{\R^d} f(\mathbf{x} ) \, e^{- 2 \pi i \mathbf{x}  \cdot \boldsymbol{\omega}} \, d\mathbf{x} ,
\quad f \in L^1(\R^d) \cap L^2(\R^d) , 
\] 
so that $\mathcal{F}[\cdot]$ extends to a unitary operator from
$L^{2}(\mathbb{R}^d)$ onto $L^{2}(\mathbb{R}^d)$.
%
%
%
%
%
%
The \emph{Paley-Wiener space} over a measurable set $S \subset \R^d$ is defined as
\[
PW(S) := \{ f \in L^2(\R^d) : \supp \widehat{f} \subset S \}
\;=\; \mathcal{F}^{-1} \big[ L^2(S) \big]
\]
equipped with the norm $\| f \|_{PW(S)} := \| f \|_{L^2(\R^d)} = \| \widehat{f} \|_{L^2(S)}$. 
If $S \subset \R^d$ has finite measure, then every $f \in PW(S)$ is continuous and
\[
f(\mathbf{x} ) = \int_{S} \widehat{f} (\boldsymbol{\omega}) \, e^{- 2 \pi i \mathbf{x}  \cdot \boldsymbol{\omega}} \, d\boldsymbol{\omega}
\quad \text{for all} \;\; \mathbf{x}  \in \R^d .
\]
The notion of \emph{a set of uniqueness/interpolation} for $PW(S)$ with $S \subset \R^d$ is defined similarly as in Definition \ref{def:set-uniqueness-interpolation}. 

\medskip

\noindent
\textbf{Completeness}. \
Assume that $f \in PW (S_1 \cup S_2)$ satisfies $f(\lambda) = 0$
for $\lambda \in \Lambda_1 \cup \Lambda_2$.
Define $f_1\in PW(S_1)$ by $\widehat{f}_1=\widehat{f} \cdot \mathds{1}_{S_1}$ and define $g\in PW(S_2)$ by
\[
 \widehat g(\boldsymbol{\omega})
 :=
 \underbrace{\widehat f(\boldsymbol{\omega})-\widehat{f}_1(\boldsymbol{\omega})}_{\text{supported in} \; S_2 }
 + \underbrace{\widehat{f}_1\big( \boldsymbol{\omega} - (a_1, \ldots, a_d) \big)}_{\text{supported in} \; S_1 {+} (a_1, \ldots, a_d)  \, \subset S_2 } .
\]
Since $S_2$ is a bounded set, it follows that $g$ is continuous and 
$g(x_1, \ldots, x_d) =  f(x_1, \ldots, x_d)  - (1- e^{2\pi i (a_1 x_1 + \cdots + a_d x_d )} ) \, f_1(x_1, \ldots, x_d)$ for all $(x_1, \ldots, x_d) \in \R^d$; in particular, we have $g(\lambda_2)= f(\lambda_2)=0$ for all $\lambda_2 \in \Lambda_2 \; (\subset \Gamma_{a_1, \ldots, a_d } )$. 
Since $g\in PW(S_2)$ and since $\Lambda_2$ is a set of uniqueness for $PW(S_2)$, we have $g=0$ and thus $f(x_1, \ldots, x_d)  = (1- e^{2\pi i (a_1 x_1 + \cdots + a_d x_d )} ) \, f_1(x_1, \ldots, x_d) $ for all $(x_1, \ldots, x_d)  \in \R^d$.
By the assumption, it holds $0 = f(\lambda_1)= (1-e^{2\pi i (a_1, \ldots, a_d) \cdot \lambda_1}) \, f_1(\lambda_1)$ for all $\lambda_1 \in \Lambda_1$.
Note that for every $\lambda_1 \in \Lambda_1 \; (\subset \R^d \backslash \Gamma_{a_1, \ldots, a_d })$, we have $e^{2\pi i (a_1, \ldots, a_d) \cdot \lambda_1} \neq 1$ and therefore $f_1(\lambda_1) = 0$.
Since $f_1 \in PW(S_1)$ and since $\Lambda_1$ is a set of uniqueness for $PW(S_1)$, we get $f_1 = 0$ and thus arrive at $f=0$.
This shows that $\Lambda_1 \cup \Lambda_2$ is a set of uniqueness for $PW (S_1 \cup S_2)$,
hence, the system $E(\Lambda_1 \cup \Lambda_2)$ is complete in $L^2 (S_1 \cup S_2)$.

\medskip

\noindent
\textbf{Riesz sequence}. \
Since $\Lambda_1 \cup \Lambda_2$ is separated and since $S_1 \cup S_2$ is bounded, the system $E (\Lambda_1 \cup \Lambda_2)$ is a Bessel sequence in $L^2 (S_1 \cup S_2)$. Thus, to prove that $E (\Lambda_1 \cup \Lambda_2)$ is a Riesz sequence in $L^2 (S_1 \cup S_2)$, it is enough to show that $\Lambda_1 \cup \Lambda_2$ is a set of interpolation for $PW (S_1 \cup S_2)$.

To show this, we will fix an arbitrary $\boldb =\{ b_{\lambda} \}_{\lambda \in \Lambda_1 \cup \Lambda_2} \in \ell_2 (\Lambda_1 \cup \Lambda_2)$ and construct a function $f \in PW (S_1 \cup S_2)$ satisfying $f(\lambda) = b_{\lambda}$ for all $\lambda \in \Lambda_1 \cup \Lambda_2$.
Since $\Lambda_2$ is a set of interpolation for $PW(S_2)$, there exists a function $g \in PW(S_2)$ satisfying $g(\lambda_2)=b_{\lambda_2}$ for all $\lambda_2 \in \Lambda_2$.
Note that for any $f_1 \in PW(S_1)$, the function
\[
f(x_1, \ldots, x_d)  :=  g(x_1, \ldots, x_d)  + (1-  e^{2\pi i (a_1 x_1 + \cdots + a_d x_d )} ) \, f_1(x_1, \ldots, x_d)  
\]
belongs in $PW (S_1 \cup S_2)$ and
satisfies $f(\lambda_2)= g(\lambda_2) = b_{\lambda_2}$ for $\lambda_2 \in \Lambda_2 \; (\subset \Gamma_{a_1, \ldots, a_d } )$.
We will now choose a particular function $f_1 \in PW(S_1)$ as follows.
Since $\Lambda_1$ is separated and since $S_2$ is bounded, the system $E(\Lambda_1)$ is a Bessel sequence in $L^2(S_2)$, so that the sequence $\{ g(\lambda_1) \}_{\lambda_1 \in \Lambda_1}$ is in $\ell_2(\Lambda_1)$ and so is the sequence $\big\{ \frac{1}{ 1-e^{2\pi i (a_1, \ldots, a_d) \cdot \lambda_1} } \big( b_{\lambda_1}-g(\lambda_1) \big) \big\}_{\lambda_1 \in \Lambda_1}$.
Here, it holds $e^{2\pi i (a_1, \ldots, a_d) \cdot \lambda_1} \neq 1$ for all $\lambda_1 \in \Lambda_1 \; (\subset \R^d \backslash \Gamma_{a_1, \ldots, a_d })$, and moreover the condition $\dist (\Lambda_1, \Gamma_{a_1, \ldots, a_d }) > 0$ ensures that
$\inf_{\lambda_1 \in \Lambda_1} | 1-e^{2\pi i (a_1, \ldots, a_d) \cdot \lambda_1} | > 0$.
Now, since $\Lambda_1$ is a set of interpolation for $PW(S_1)$, there exists a function $f_1 \in PW(S_1)$ satisfying $f_1(\lambda_1)= \frac{1}{ 1-e^{2\pi i (a_1, \ldots, a_d) \cdot \lambda_1} } \big( b_{\lambda_1}-g(\lambda_1) \big)$ for all $\lambda_1 \in \Lambda_1$.
We then have $f(\lambda_1)=b_{\lambda_1}$ for all $\lambda_1 \in \Lambda_1$. This shows that the function $f \in PW (S_1 \cup S_2)$ satisfies $f(\lambda) = b_{\lambda}$ for all $\lambda \in \Lambda_1 \cup \Lambda_2$, as desired.

\medskip

\noindent
\textbf{Riesz basis}. \
By definition, a Riesz basis is a complete Riesz sequence. Hence, this part follows immediately by combining the \emph{completeness} and \emph{Riesz sequence} parts.
\hfill $\Box$ 

\section{Proof of Proposition \ref{prop:characterization-based-on-Fourier-submatrices}}
\label{sec:appendix-proof-prop-characterization-based-on-Fourier-submatrices}

\noindent
\textbf{(a) Frame}. \
Fix any $f \in L^2(\cup_{\ell \in \mathcal{L}} \, S{+}\frac{\ell}{N})$. Then for any $\lambda \in \Omega {+} c_k \; (\subset N\Z {+} c_k)$ with $1 \leq k \leq K$,
\[
\begin{split}
\langle f, e^{2\pi i \lambda (\cdot)} \rangle_{L^2(\cup_{\ell \in \mathcal{L}} \, S{+}\frac{\ell}{N})}
&= \int_{\cup_{\ell \in \mathcal{L}} \, S{+}\frac{\ell}{N}} f(t) \, e^{- 2\pi i \lambda t} \, dt \\
&= \int_S \sum_{\ell \in \mathcal{L}} f ( t {+} \tfrac{\ell}{N} ) \, e^{- 2\pi i \lambda (t + \frac{\ell}{N}) } \, dt \\
&= \int_S F_{k}(t) \, e^{- 2\pi i \lambda t} \, dt = \langle F_{k}, e^{2\pi i \lambda (\cdot)} \rangle_{L^2(S)}
\end{split}
\]
where
\begin{equation}\label{eqn:def-Fm}
F_{k}(t)
:= \sum_{\ell \in \mathcal{L}} f (t {+} \tfrac{\ell}{N} ) \, e^{- 2\pi i c_k \ell / N}
\quad \text{for} \;\; t \in S \; (\subset [0,\tfrac{1}{N})) .
\end{equation}
Since $E(\Omega)$ is a Riesz basis, thus a frame, for $L^2(S)$ with lower bound $A > 0$, we use Proposition \ref{prop:RB-basic-operations} to obtain
\[
\begin{split}
\sum_{k=1}^K \sum_{\lambda \in \Omega + c_k} |\langle f, e^{2\pi i \lambda (\cdot)} \rangle_{L^2(\cup_{\ell \in \mathcal{L}} \, S{+}\frac{\ell}{N})}|^2
&= \sum_{k=1}^K \sum_{\lambda \in \Omega + c_k} |\langle F_{k}, e^{2\pi i \lambda (\cdot)} \rangle_{L^2(S)} |^2 \\
&\geq A \, \sum_{k=1}^K
\| F_{k} \|_{L^2(S)}^2 .
\end{split}
\]
Collecting \eqref{eqn:def-Fm} for $k=1,\ldots,K$, we get
\begin{equation}\label{eqn:pf-frame-rel}
\big[
F_{k}(t)
\big]_{k=1}^K
=
\underbrace{\Big[
 e^{- 2\pi i c_k \ell / N}
\Big]_{c_k \in \mathcal{K}, \, \ell \in \mathcal{L}}}_{W_{\mathcal{K},\mathcal{L}} \,\in\, \C^{K \times L}} \,
 \big[ f ( t {+} \tfrac{\ell}{N} )
  \big]_{\ell \in \mathcal{L}}
\quad \text{for} \;\; t \in S .
\end{equation}
(i) If $K < L$, then the kernel of $W_{\mathcal{K},\mathcal{L}} \in \C^{K \times L}$ has dimension at least $L {-} K > 0$.
Fix any nontrivial vector $\bv \in \kernel ( W_{\mathcal{K},\mathcal{L}} ) \subset \C^{L}$ and set $f \in L^2(\cup_{\ell \in \mathcal{L}} \, S{+}\frac{\ell}{N})$ by $[ f ( t {+} \frac{\ell}{N} ) ]_{\ell \in \mathcal{L}} = \bv$ for $t \in S$, so that $\sum_{k=1}^K \| F_{k} \|_{L^2(S)}^2 = 0$ by \eqref{eqn:pf-frame-rel} while $\| f \|_{L^2(\cup_{\ell \in \mathcal{L}} \, S{+}\frac{\ell}{N})}^2 = |S| \cdot \| \bv \|_2^2 \neq 0$.
Hence, the system $E(\cup_{k=1}^K \Omega {+} c_k)$ is not a frame for $L^2(\cup_{\ell \in \mathcal{L}} \, S{+}\frac{\ell}{N})$.
Note that since $K < L$, the matrix $W_{\mathcal{K},\mathcal{L}} \in \C^{K \times L}$ is not injective. \\
(ii) If $K \geq L$, then
\[
\begin{split}
\sum_{k=1}^K |F_{k}(t)|^2
&\overset{\eqref{eqn:pf-frame-rel}}{=} \Big\| W_{\mathcal{K},\mathcal{L}} \,  \big[ f ( t {+} \tfrac{\ell}{N} ) \big]_{\ell \in \mathcal{L}} \Big\|_2^2 \\
& \overset{(K \geq L)}{\geq}
\sigma_{\min}^2 (W_{\mathcal{K},\mathcal{L}})  \cdot \sum_{\ell \in \mathcal{L}} | f ( t {+} \tfrac{\ell}{N} ) |^2
\quad \text{for} \;\; t \in S ,
\end{split}
\]
and integrating both sides with respect to $t$ gives
\[
\sum_{k=1}^K
\| F_{k} \|_{L^2(S)}^2
\;\geq\;
\sigma_{\min}^2 (W_{\mathcal{K},\mathcal{L}}) \cdot \| f \|_{L^2(\cup_{\ell \in \mathcal{L}} \, S{+}\frac{\ell}{N})}^2 ,
\]
and therefore
\[
\sum_{k=1}^K \sum_{\lambda \in \Omega + c_k} \big| \langle f, e^{2\pi i \lambda (\cdot)} \rangle_{L^2(\cup_{\ell \in \mathcal{L}} \, S{+}\frac{\ell}{N})} \big|^2
\;\geq\;
A \, \sigma_{\min}^2 (W_{\mathcal{K},\mathcal{L}})
\cdot \| f \|_{L^2(\cup_{\ell \in \mathcal{L}} \, S{+}\frac{\ell}{N})}^2 .
\]
This shows that if $K \geq L$, then $E(\cup_{k=1}^K \Omega {+} c_k)$ is a frame for $L^2(\cup_{\ell \in \mathcal{L}} \, S{+}\frac{\ell}{N})$ if and only if the (tall) matrix $W_{\mathcal{K},\mathcal{L}} \in \C^{K \times L}$ is injective.

\medskip

\noindent
\textbf{(b) Riesz sequence}. \
For any $\boldb = \{ b_\lambda \} \in \ell_2 (\cup_{k=1}^K \Omega {+} c_k)$, we set
\[
f = \sum_{\lambda \in \cup_{k=1}^K \Omega + c_k} b_\lambda \, e^{2\pi i \lambda (\cdot)} .
\]
Then for $t \in [0,\frac{1}{N})$ and $\ell \in \{ 0, 1, \cdots, N{-}1 \}$, we have
\[
f ( t {+} \tfrac{\ell}{N} )
= \sum_{k=1}^K \, \sum_{\lambda \in \Omega + c_k}
b_\lambda \, e^{2\pi i \lambda (t + \frac{\ell}{N} )}
= \sum_{k=1}^K e^{2\pi i c_k \ell / N} \sum_{\lambda \in \Omega + c_k} b_\lambda \, e^{2\pi i \lambda t} .
\]
Collecting this equation for $\ell \in \mathcal{L}$, we obtain
\begin{equation}\label{eqn:pf-RS-rel}
\big[
f ( t {+} \tfrac{\ell}{N} )
\big]_{\ell \in \mathcal{L}}
= \underbrace{\Big[
e^{2\pi i c_k \ell / N}
\Big]_{\ell \in \mathcal{L}, \, k=1,\ldots, K}}_{(W_{\mathcal{K},\mathcal{L}})^* \,\in\, \C^{L \times K}}
 \big[ \textstyle \sum_{\lambda \in \Omega + c_k} b_\lambda \, e^{2\pi i \lambda t}
 \big]_{k=1}^K .
\end{equation}
(i) If $K > L$, then the kernel of $(W_{\mathcal{K},\mathcal{L}})^* \in \C^{L \times K}$ has dimension at least $K {-} L > 0$.
Fix any nontrivial vector $\bv = \{ v_k \}_{k=1}^K \in \kernel ( ( W_{\mathcal{K},\mathcal{L}} )^* ) \subset \C^{K}$.
For each $k = 1, \ldots, K$, the system $E(\Omega {+} c_k)$ is a Riesz basis for $L^2(S)$, so there exists a unique sequence $\{ b_\lambda \} \in \ell_2 (\Omega {+} c_k)$ such that
$\sum_{\lambda \in \Omega + c_k} b_\lambda \, e^{2\pi i \lambda t} = v_k$ for a.e.~$t \in S$.
We thus obtain a nontrivial sequence $\boldb = \{ b_\lambda \} \in \ell_2 (\cup_{k=1}^K \Omega {+} c_k)$ satisfying
\[
\big[ \textstyle \sum_{\lambda \in \Omega + c_k} b_\lambda \, e^{2\pi i \lambda t}
\big]_{k=1}^K
=
\bv
\quad \text{for a.e.} \;\; t \in S .
\]
The corresponding function $f$ satisfies $\| f \|_{L^2(\cup_{\ell \in \mathcal{L}} \, S{+}\frac{\ell}{N})}^2 = \int_S \sum_{\ell \in \mathcal{L}}  | f ( t {+} \frac{\ell}{N} )|^2 \, dt = 0$ by \eqref{eqn:pf-RS-rel} while $\| \boldb \|_2 \neq 0$.
Hence, the system $E(\cup_{k=1}^K \Omega {+} c_k)$ is not a Riesz sequence in $L^2(\cup_{\ell \in \mathcal{L}} \, S{+}\frac{\ell}{N})$.
Note that since $K > L$, the matrix $W_{\mathcal{K},\mathcal{L}} \in \C^{K \times L}$ is not surjective. \\
(ii) If $K \leq L$, then
\[
\begin{split}
\sum_{\ell \in \mathcal{L}}  \big| f ( t {+} \tfrac{\ell}{N} ) \big|^2
&\overset{\eqref{eqn:pf-RS-rel}}{=} \Big\| (W_{\mathcal{K},\mathcal{L}})^* \,  \big[ \textstyle \sum_{\lambda \in \Omega + c_k} b_\lambda \, e^{2\pi i \lambda t}
 \big]_{c_k \in \mathcal{K}} \Big\|_2^2 \\
&\overset{(K \leq L)}{\geq}
\sigma_{\min}^2 (W_{\mathcal{K},\mathcal{L}})  \cdot \sum_{k=1}^K \Big| \sum_{\lambda \in \Omega + c_k} b_\lambda \, e^{2\pi i \lambda t}  \Big|^2
\quad \text{for} \;\; t \in S ,
\end{split}
\]
where we used that $\sigma_{\min}^2 ((W_{\mathcal{K},\mathcal{L}})^* ) = \sigma_{\min}^2 (W_{\mathcal{K},\mathcal{L}})$.
Further, integrating both sides with respect to $t$ gives
\[
\begin{split}
\| f \|^2_{L^2(\cup_{\ell \in \mathcal{L}} \, S{+}\frac{\ell}{N})}
&\geq
\sigma_{\min}^2 (W_{\mathcal{K},\mathcal{L}})
\cdot \sum_{k=1}^K \Big\| \sum_{\lambda \in \Omega + c_k} b_\lambda \, e^{2\pi i \lambda (\cdot)} \Big\|_{L^2(S)}^2 \\
&\geq
\sigma_{\min}^2 (W_{\mathcal{K},\mathcal{L}})
\cdot \sum_{k=1}^K  A \, \big\| \{ b_\lambda \}_{\lambda \in \Omega + c_k} \big\|_2^2  \\
&= A \, \sigma_{\min}^2 (W_{\mathcal{K},\mathcal{L}}) \cdot \| \boldb \|_2^2 ,
\end{split}
\]
where we used that $E(\Omega)$ is a Riesz basis for $L^2(S)$ with lower Riesz bound $A > 0$.
This shows that if $K \leq L$, then $E(\cup_{k=1}^K \Omega {+} c_k)$ is a Riesz sequence in $L^2(\cup_{\ell \in \mathcal{L}} \, S{+}\frac{\ell}{N})$ if and only if the (short and fat) matrix $W_{\mathcal{K},\mathcal{L}} \in \C^{K \times L}$ is surjective.

\medskip

\noindent
\textbf{(c) Riesz basis}. \
Recall that a sequence in a separable Hilbert space is a Riesz basis if and only if it is both a frame and a Riesz sequence.
Hence, this part follows immediately by combining the \emph{frame} and \emph{Riesz sequence} parts.
\hfill $\Box$ 


\section*{Acknowledgments}

This work was supported by the National Research Foundation of Korea (NRF) grant funded by the Korean government (MSIT) (RS-2023-00275360).
The proof of Theorem \ref{thm:two-sets-nested-translation-by-a} is essentially due to David Walnut.
The author would like to thank Andrei Caragea, G\"{o}tz Pfander and David Walnut for fruitful and stimulating discussions.
Theorem~\ref{thm:two-sets-nested-translation-by-a-dimension-d} is inspired by a personal communication with Laura De Carli. The author would like to thank her for helpful comments. 

%






\end{document}